\newtheorem{theorem}{Theorem}[section]
\newtheorem{proposition}[theorem]{Proposition}
\newtheorem{lemma}[theorem]{Lemma}
\newtheorem{remark}[theorem]{Remark}
\newtheorem{corollary}[theorem]{Corollary}
\newtheorem{conjecture}[theorem]{Conjecture}
   \newcommand{\ba}{\begin{eqnarray}}
   \newcommand{\na}{\end{eqnarray}}
   \newcommand{\ban}{\begin{eqnarray*}}
   \newcommand{\nan}{\end{eqnarray*}}
\newcommand{\bA}{{\mathbb A}}
\newcommand{\bC}{{\mathbb C}}
\newcommand{\bD}{{\mathbb D}}
\newcommand{\bF}{{\mathbb F}}
\newcommand{\bP}{{\mathbb P}}
\newcommand{\bQ}{{\mathbb Q}}
\newcommand{\bZ}{{\mathbb Z}}
\newcommand{\cO}{{\mathcal O}}
\newcommand{\cF}{{\mathcal F}}
\newcommand{\cI}{{\mathcal I}}
\newcommand{\fz}{{\mathfrak z}}
\newcommand{\sI}{{\mathscr I}}
  \newcommand{\<}{\langle}
  \renewcommand{\>}{\rangle}
\newcommand{\suml}{\sum\limits}
\newcommand{\prodl}{\prod\limits}
\begin{document}

\title{A flop formula for Donaldson-Thomas invariants}

\author[Hua-Zhong Ke]{Hua-Zhong Ke}
\maketitle

\begin{center}
\emph{Department of Mathematics, Sun Yat-Sen University, Guangzhou,  510275, China}
\emph{kehuazh@mail.sysu.edu.cn}
\end{center}

{\bf Abstract:} Let $X$ and $X'$ be nonsingular projective $3$-folds related by a flop of a disjoint union of $(-2)$-curves. We prove a flop formula relating the Donaldson-Thomas invariants of $X$ to those of $X'$, which implies some simple relations among BPS state counts. As an application, we show that if $X$ satisfies the GW/DT correspondence for primary insertions and descendants of the point class, then so does $X'$. We also propose a conjectural flop formula for general flops.

{\bf Keywords:} 
Donaldson-Thomas,
flop,
$(-2)$-curve,
BPS state counts,
GW/DT corrrespondence

{\bf MR(2010) Subject Classification:} 14N35

\date{\today}

\tableofcontents

\section{Introduction}

The Donaldson-Thomas theory of a nonsingular projective $3$-fold $X$ counts the number of stable sheaves on $X$ \cite{DT,Th}. In particular, when considering ideal sheaves of curves, the theory gives virtual numbers of embedded curves in $X$. Another curve counting theory on $X$ is the much studied Gromov-Witten theory, which essentially counts stable maps from curves with marked points to $X$. In \cite{MNOP1,MNOP2}, Maulik, Nekrasov, Okounkov, and Pandharipande proposed a remarkable conjecture that the Gromov-Witten theory of $X$ is equivalent to the Donaldson-Thomas theory of $X$ in a subtle way. This suggests that many phenomenon in one theory have counter parts in the other theory.

The above mentioned curve counting theories are deformation invariant. A fundamental problem in Gromov-Witten theory is to investigate the transformation of Gromov-Witten invariants under birational surgeries \cite{Ru}. The first breakthrough is the work of Li and Ruan \cite{LR}, who showed that, for $3$-folds, the primary Gromov-Witten theories are invariant under general flops. It is also important to study the effect of biraional surgeries on Donaldson-Thomas theory. Hu and Li \cite{HL} used the degeneration formula to understand the change of Donaldson-Thomas invariants under flops of a disjoint union of $(-1,-1)$ curves which are all numerally equivalent. For general flops between Calabi-Yau $3$-folds, Toda \cite{T2} used the categorical method to established a flop formula (see also \cite{Ca}). 

In this paper, we prove a flop formula in Donaldson-Thomas theory for flops of a disjoint union $(-2)$-curves, and derive some interesting relations on BPS state counts. As an application, we give positive evidence for the conjectural GW/DT correspondence. Here an embedded curve in a $3$-fold is a $(-2)$-curve \cite{Re} if it is a nonsingular rational curve with normal bundle of type $(-1,-1)$ or $(0,-2)$. Our flop formula generalizes the result of Hu and Li \cite{HL}, since a $(-1,-1)$-curve is a $(-2)$-curve, and we do not assume that the curves are numerically equivalent.

Throughout this paper, let $X$ and $X'$ be nonsingular projective $3$-folds over $\bC$, which are related by a flop $f:X\dashrightarrow X'$ of some contraction \cite{Ko}. Then $f$ is a birational map, and it is biregular outside of a subvariety of codimension two in $X$, called the center of $f$. The center of $f$ is a disjoint union of trees of rational curves, and it has a neighborhood with trivial canonial bundle. We have a natural isomorphism of groups
\ban
\cF:H_2(X,\bZ)\xrightarrow{\cong} H_2(X',\bZ),
\nan
defined as follows. For any $\beta\in H_2(X,\bZ)$, we can choose a real $2$-dimensional pseudo-submanifold $\Sigma$ representing $\beta$ in $X$, which lies in the complement of the center of $f$. Now $\cF\beta$ is represented by $f(\Sigma)$ in $X'$, which lies in the complement of the center of $f^{-1}$. Similarly, by considering Poincar\'e duals of classes of degree$\geqslant3$, we also have an isomorphism
\ban
H^{\geqslant 3}(X,\bQ)\rightarrow H^{\geqslant 3}(X',\bQ),
\nan
which can be extended to an isomorphism of cohomology groups
\ban
H^{*}(X,\bQ)\xrightarrow{\cong} H^{*}(X',\bQ),
\nan
by requiring this isomorphism to preserve the Poincar\'e pairing. The isomorphism will also be denoted by $\cF$ by abuse of notation. Let $Cen(f)$ be the subgroup of $H_2(X,\bZ)$ generated by the cycles of irreducible curves in the center of $f$. The main result of this paper is the following.

\begin{proposition}\label{mainresult}
Let $f$ be a flop of a disjoint union of $(-2)$-curves. Suppose that $\gamma_1,\cdots,\gamma_m\in H^{*}(X,\bQ)(m\geqslant 0)$ have supports away from the center of $f$, and $d_1,\cdots,d_m\in\bZ_{\geqslant 0}$. Then we have
\ba
\frac{\suml_{\beta\in H_2(X,\bZ)}v^\beta Z'_{DT}(X;q|\prodl_{i=1}^m\tilde\tau_{d_i}(\gamma_i))_\beta}{\suml_{\beta\in Cen(f)}v^{\beta} Z'_{DT}(X;q|)_{\beta}}&=&\frac{\suml_{\beta\in H_2(X,\bZ)}v^\beta Z'_{DT}(X';q|\prodl_{i=1}^m\tilde\tau_{d_i}(\cF\gamma_i))_{\cF\beta}}{\suml_{\beta\in Cen(f)}v^{\beta} Z'_{DT}(X';q|)_{\cF\beta}},\label{flop}\\
Z'_{DT}(X;q|)_\beta&=&Z'_{DT}(X';q|)_{-\cF\beta},\qquad\forall\beta\in Cen(f).\label{center}
\na
\end{proposition}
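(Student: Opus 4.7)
The plan is to combine the Donaldson-Thomas degeneration formula of Maulik-Nekrasov-Okounkov-Pandharipande (extended by Li-Wu to the relative setting) with an explicit local computation near each $(-2)$-curve in the center. First I would construct a degeneration $\mathcal{X}\to\Delta$ of $X$ whose central fiber is $Y\cup_D\tilde Z$, where $\tilde Z$ is a projective completion of a tubular neighborhood of the center of $f$ and $Y$ is the closure of the complement. The flop induces an analogous degeneration of $X'$ with central fiber $Y\cup_D\tilde Z'$ sharing the same pair $(Y,D)$, because $f$ is biregular outside the center. Since the center is a disjoint union of $(-2)$-curves, $\tilde Z$ (and $\tilde Z'$) further decomposes as a disjoint union indexed by the connected components, and the relative DT invariants of $\tilde Z/D$ factor accordingly.

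Applying the degeneration formula to both sides, the series $\suml_\beta v^\beta Z'_{DT}(X;q|\prodl_i\tilde\tau_{d_i}(\gamma_i))_\beta$ becomes a convolution over relative boundary data $\eta$ on $D$ of the relative DT series of $(Y,D)$ (which carries the insertions, since each $\gamma_i$ has support in the $Y$-part away from the center) and of $(\tilde Z,D)$ (no insertions). Curve classes on $\tilde Z$ are necessarily multiples of the center curves, so $\beta_{\tilde Z}\in Cen(f)$, and on the $X'$-side the $\tilde Z'$-contribution carries classes in $Cen(f^{-1})$, which match $-Cen(f)$ under $\cF$. The denominator in \eqref{flop} is exactly the $\beta_Y=0$ restriction of the same convolution, so forming the ratio cancels the local $\tilde Z$-factor $\eta$ by $\eta$ and leaves a quantity controlled entirely by $(Y,D)$ and the insertions. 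Since these data coincide on the two sides (via $\cF$), \eqref{flop} follows once the local identity below is established; \eqref{center} is the $m=0$ specialization of the same local input.

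The key remaining step is, for each connected component $\tilde Z_i$ of the local model and its flop $\tilde Z_i'$, the identity
\ban
Z^{rel}_{DT}(\tilde Z_i/D_i;q)_{\beta_{\tilde Z},\eta}=Z^{rel}_{DT}(\tilde Z_i'/D_i;q)_{-\cF\beta_{\tilde Z},\eta}
\nan
for every boundary datum $\eta$ and every nonnegative multiple $\beta_{\tilde Z}$ of the center curve. For a $(-1,-1)$-curve the local model is the resolved conifold, which is self-flopping, and the identity follows from the Hu-Li treatment together with the MNOP vertex computation. For a $(0,-2)$-curve the local model is $\cO\oplus\cO(-2)\to\bP^1$; the flopped side is a genuinely different local Calabi-Yau, so one would compute both relative partition functions via $(\bC^*)^2$-equivariant localization and the topological-vertex formalism and then verify the identity directly. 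The main obstacle I expect is exactly the $(0,-2)$ case: unlike the conifold, the flop is not manifestly symmetric, the relative boundary weights at $D$ must be tracked precisely, and the sign-flip encoded in $-\cF$ has to emerge from the explicit vertex computation.
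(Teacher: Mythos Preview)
Your degeneration step has a genuine gap. The semi-stable degeneration one actually has here is deformation to the normal cone along the center: the central fiber is $X_1\cup_{E_1}\bP(N_C\oplus\cO)$, where $X_1$ is the blow-up of $X$ along the center, so the ``outside'' piece is this blow-up, not the closure of a complement. For a $(-1,-1)$-curve the blow-up is indeed common to $X$ and $X'$, and your scheme reduces to the Hu--Li argument. But for a $(0,-2)$-curve of width $w>1$ the exceptional divisor is $\bF_2$, its negative section is a new $(-2)$-curve of width $w-1$, and $X_1$ and $X_1'$ differ by the flop of that curve. Thus the pair $(Y,D)$ is \emph{not} shared between the two sides and the one-step cancellation you describe does not go through. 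A related misconception: your ``local model $\cO\oplus\cO(-2)\to\bP^1$'' records only the normal bundle; the analytic neighborhood, and hence the flop, is governed by the width $w$ via the singularity $x^2+y^2+z^2+t^{2w}=0$, so there is not a single $(0,-2)$ local computation to perform, and for $w>1$ that local threefold is not toric, so the topological-vertex route you sketch is not available.

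The paper replaces your single degeneration by an induction on $w$, using Reid's factorization of the flop into $w$ blow-ups followed by $w$ blow-downs. One deformation to the normal cone relates $X$ to the relative theory of $(X_1,E_1)$, and a second relates $(X_1,E_1)$ back to absolute invariants of $X_1$; the dimension constraint forces the relative boundary condition to be empty throughout, so only multiples of the $[C_i]$ appear on the local side. The inductive hypothesis then compares $X_1$ with $X_1'$ (a flop of width $w-1$), and two short effectiveness lemmas for $\phi_0^!\beta$ with $\beta\in Cen(f)$ make the denominators in \eqref{flop} line up at each stage; \eqref{center} is obtained by the same induction.
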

\begin{remark}
We remark that we can choose the support of $\gamma_i$ away from the center of $f$ if deg$\gamma_i>2$.
\end{remark}

We sketch the proof of Proposition \ref{mainresult}, the detail of which will be given in Section 3. By a beautiful result of Reid \cite{Re}, we can decompose the flop $f$ of $(-2)$-curves into a sequence of blow-ups of $(-2)$-curves followed by a sequence of blow-downs. Since blow-ups can be described in terms of semi-stable degenerations, it follows that we can use the degeneration formula \cite{LW} and the absolute/relative correspondence \cite{HLR,MP} to relate invariants of $X$ to those of the blow-up of $X$ (see \eqref{X}). Therefore, in principle, the Donaldson-Thomas invariants of $X$ can be related to those of $X'$. Due to the denominators in \eqref{flop}, we need to understand the transformation of the invariants attached to classes in $Cen(f)$ under blow-ups. To this end, we give a detailed analysis of the change of effectiveness of classes in $Cen(f)$ under blow-up (see Lemma \ref{1stlemma} and \ref{2ndlemma}).

Proposition \ref{mainresult} relates the Donaldson-Thomas invariants of $X$ to those of $X'$ in a nontrivial way. In \cite{HHKQ} and \cite{Ke}, we obtained some blow-up formulae for Gromov-Witten and stable pair theories which contain some extra factors, and we discovered that these formulae imply some interesting relation among BPS state counts. In this paper, we consider the change of BPS state counts of Donaldson-Thomas theory under flops. Proposition \ref{mainresult} implies the following simple flop formulae for BPS state counts.
\begin{corollary}\label{BPS}
Let $f$ be a flop of a disjoint union of $(-2)$-curves. Suppose that $\gamma_1,\cdots,\gamma_m\in H^*(X,\bQ)(m\geqslant0)$, and $g\in\bZ$. Then we have
\ba
&&n_{g,\beta}^X(\gamma_1,\cdots,\gamma_m)=n_{g,\cF\beta}^{X'}(\cF\gamma^1,\cdots,\cF\gamma_m),\quad\forall\beta\in H_2(X,\bZ)\setminus Cen(f);\label{BPSflop}\\
&&n_{g,\beta}^X=n_{g,-\cF\beta}^{X'},\quad\forall\beta\in Cen(f)\setminus\{0\}.\label{BPScenter}
\na
\end{corollary}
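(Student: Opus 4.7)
The strategy is to convert the rational flop identity \eqref{flop} into an identity of logarithmic generating series, then extract BPS counts class-by-class via the Gopakumar-Vafa/MNOP transform.

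First, I would rewrite \eqref{flop} in the multiplicative form $A_X\cdot B_{X'}^{\cF}=A_{X'}^{\cF}\cdot B_X$, where
\ban
A_X&=&\suml_{\beta\in H_2(X,\bZ)}v^\beta Z'_{DT}(X;q|\prodl_{i=1}^m\tilde\tau_{d_i}(\gamma_i))_\beta,\\
B_X&=&\suml_{\beta\in Cen(f)}v^\beta Z'_{DT}(X;q|)_\beta,
\nan
and $A_{X'}^{\cF}, B_{X'}^{\cF}$ are defined analogously, with $Z'_{DT}(X';q|\cdots)_{\cF\beta}$ replacing $Z'_{DT}(X;q|\cdots)_\beta$. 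Taking formal logarithms yields
\ban
\log A_X-\log A_{X'}^{\cF}=\log B_X-\log B_{X'}^{\cF}.
\nan
Since $B_X$ and $B_{X'}^{\cF}$ are formal power series supported on the subgroup $Cen(f)\subset H_2(X,\bZ)$, and $Cen(f)$ is closed under addition, their logarithms are also supported on $Cen(f)$. Consequently, for any $\beta\in H_2(X,\bZ)\setminus Cen(f)$ the $v^\beta$-coefficient of the right-hand side vanishes, so
\ban
[\log A_X]_{v^\beta}=[\log A_{X'}^{\cF}]_{v^\beta}=[\log A_{X'}]_{v^{\cF\beta}}.
\nan

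Second, the BPS counts $n_{g,\beta}^X(\gamma_1,\ldots,\gamma_m)$ are by construction extracted from the coefficients of the logarithm of the DT partition function via the MNOP transform (after the change of variables $q=-e^{iu}$), and this transform operates class-by-class in $\beta$. Applying it to the identity of log-coefficients above produces \eqref{BPSflop}. For \eqref{BPScenter}, observe that for $\beta\in Cen(f)\setminus\{0\}$ every multiple $k\beta$ ($k\ge 1$) also lies in $Cen(f)$, so \eqref{center} yields $Z'_{DT}(X;q|)_{k\beta}=Z'_{DT}(X';q|)_{-k\cF\beta}$ for all $k\ge 1$; applying the GV multi-cover transform to these two families of invariants produces the same BPS output, establishing $n_{g,\beta}^X=n_{g,-\cF\beta}^{X'}$.

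The main obstacle is the mismatch between the hypotheses of Proposition \ref{mainresult}, which requires each $\gamma_i$ to have a representative supported away from the center of $f$, and those of the corollary, where $\gamma_i\in H^*(X,\bQ)$ is arbitrary. By the remark following the proposition this is automatic for $\deg\gamma_i>2$; for $\deg\gamma_i\le 2$ one needs a supplementary argument showing that the BPS count at a non-center class is insensitive to the center-supported component of the insertion. For divisor insertions $\tilde\tau_0(D)$ this follows from the DT divisor equation combined with the $\cF$-invariance of $D\cdot\beta$ for $\beta\notin Cen(f)$; for higher descendants $\tilde\tau_{d}(D)$ with $d>0$ a more delicate argument, perhaps via degeneration to the normal cone of the center, will be required.
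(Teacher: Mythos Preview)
Your core strategy---taking logarithms of both sides of \eqref{flop}, using that $Cen(f)$ is a subgroup so $\log B_X-\log B_{X'}^{\cF}$ is supported there, and then reading off BPS counts coefficient-by-coefficient---is exactly the paper's argument in dual form: the paper writes the BPS generating series as an exponential and uses the extremal-face property of the center to factor the partition function, whereas you take logs; these are the same manipulation. One small imprecision: for classes with $\int_\beta c_1(X)=0$ the GV transform is not literally class-by-class (the $v^\beta$-coefficient of the log involves $n_{g,\beta/r}$ for all $r\mid\beta$), but since every divisor of a non-center class is again non-center, a M\"obius-type induction still gives \eqref{BPSflop}.

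Where you diverge from the paper is in the ``main obstacle'' paragraph, and here you have overcomplicated things. The BPS counts $n_{g,\beta}^X(\gamma_1,\ldots,\gamma_m)$ are by definition attached to \emph{primary} insertions $\tilde\tau_0(\gamma_i)$ only, so your concern about higher descendants $\tilde\tau_d(D)$ with $d>0$ never arises. The paper dispatches the support hypothesis in one line: by Lemma~3.1 of \cite{EQ} the primary DT theory is determined by insertions of degree $>2$, hence the BPS counts vanish for insertions of degree $<2$ and satisfy the divisor equation for degree $2$; thus it suffices to treat degree-$>2$ insertions, which automatically admit representatives supported off the center. Your divisor-equation remark is correct for degree $2$, but you missed the vanishing for degree $<2$ and introduced an irrelevant worry about descendants.
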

The Donaldson-Thomas theory of X counts embedded curves on X only in a virtual sense. A fundamental problem in the Donaldson-Thomas theory is to understand the hidden enumerative meanings of the invariants. It is conjectured that BPS state counts are enumerative. It is interesting to understand Corollary \ref{BPS} from the point of view of enumerative geometry. 

As another application, we investigate the conjectural GW/DT correspondence. In the primary case, the correspondence is established for several classes of $3$-folds, including toric $3$-folds \cite{MOOP}, and Calabi-Yau $3$-folds which are complete intersections in products of projective spaces \cite{PP,T1}. However, in the descendent case, not much is known. The following result gives further positive evidence to the MNOP conjecture.

\begin{corollary}\label{GWDT}
Let $f$ be a flop of a disjoint union of $(-2)$-curves. Assume that $X$ satisfies the GW/DT correspondence for primary insertions and descendants of the point class. Then so does $X'$.
\end{corollary}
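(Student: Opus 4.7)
The plan is to deduce GW/DT on $X'$ from GW/DT on $X$ by combining Proposition \ref{mainresult} (the DT flop formula) with its Gromov--Witten counterpart. Recall that the GW/DT correspondence for primary insertions and descendants of the point class takes, after the change of variables $-q = e^{iu}$, the shape of an equality
\[
Z'_{GW}(X;u\mid\prodl_i\tau_{d_i}(\gamma_i))_\beta \;=\; (-iu)^{c}\,Z'_{DT}(X;q\mid\prodl_i\tilde\tau_{d_i}(\gamma_i))_\beta,
\]
where the exponent $c=c(\beta,d_i,\gamma_i)$ depends only on $\beta$ and on intersection/degree data of the insertions. Because $\cF$ preserves degrees of cohomology classes and the Poincar\'e pairing, $c$ is invariant under the substitutions $\beta\mapsto\cF\beta$ and $\gamma_i\mapsto\cF\gamma_i$.

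Next, I would establish the Gromov--Witten analogue of Proposition \ref{mainresult}: under the same hypothesis that the $\gamma_i$'s are supported away from the center of $f$, an identity
\[
\frac{\suml_{\beta}v^\beta Z'_{GW}(X;u\mid\prodl_i\tau_{d_i}(\gamma_i))_\beta}{\suml_{\beta\in Cen(f)}v^\beta Z'_{GW}(X;u\mid)_\beta}=\frac{\suml_{\beta}v^\beta Z'_{GW}(X';u\mid\prodl_i\tau_{d_i}(\cF\gamma_i))_{\cF\beta}}{\suml_{\beta\in Cen(f)}v^\beta Z'_{GW}(X';u\mid)_{\cF\beta}},
\]
together with $Z'_{GW}(X;u\mid)_\beta = Z'_{GW}(X';u\mid)_{-\cF\beta}$ for $\beta\in Cen(f)$. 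For primary insertions this is contained in Li--Ruan's flop invariance of quantum cohomology \cite{LR}; for descendants of the point class, which can always be represented by a point away from the center of $f$, these identities are obtained by running the argument of Section 3 (Reid's decomposition into blow-ups and blow-downs of $(-2)$-curves, followed by the degeneration formula and the absolute/relative correspondence) on the Gromov--Witten side, using Li's GW degeneration formula in place of \cite{LW} and the GW absolute/relative correspondence in place of \cite{HLR,MP}.

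With these two ingredients in place the conclusion is an exercise in generating series. Fix $\beta'\in H_2(X',\bZ)$ and insertions $\prodl_i\tilde\tau_{d_i}(\gamma_i')$ on $X'$ with the $\gamma_i'$ supported away from the center of $f^{-1}$ (which is automatic for $\deg\gamma_i'>2$), and write $\gamma_i'=\cF\gamma_i$. I would apply Proposition \ref{mainresult} to express the reduced DT series on $X'$ in terms of DT data on $X$; invoke GW/DT on $X$ term-by-term to pass to the corresponding GW series on $X$; then apply the GW flop formula above to transport the result back to $X'$. Since $c$ is invariant under $\cF$ and the denominators $\suml_{\beta\in Cen(f)}v^\beta Z'_?(X';\cdots)_{\cF\beta}$ on the two sides are invertible power series (with constant term $1$), extracting the coefficient of $v^{\beta'}$ yields the GW/DT identity on $X'$ at $(\beta',\gamma_i',d_i)$.

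The hard step will be the GW flop formula with descendant insertions in the exact ratio form required. The primary case is Li--Ruan, but extending to descendants of the point class requires verifying that the GW degeneration formula and the GW absolute/relative correspondence are compatible both with point descendants and with the Reid decomposition, and that the sign in the GW center relation matches \eqref{center}. The sign compatibility is exactly where the $(-2)$-curve hypothesis will enter on the GW side, in parallel with how it governs the DT center relation \eqref{BPScenter} of Corollary \ref{BPS}.
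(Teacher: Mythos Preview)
Your overall strategy coincides with the paper's: combine the DT flop formula (Proposition~\ref{mainresult}) with its GW analogue and the assumed GW/DT correspondence on $X$, then match the center denominators via \eqref{center} and its GW counterpart to deduce GW/DT on $X'$.

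Where you diverge is in what you flag as the ``hard step.'' You propose to rerun the Section~3 degeneration argument on the Gromov--Witten side to obtain the GW flop formula with descendants of the point class, and you suggest the $(-2)$-curve hypothesis will be needed for the GW center sign relation. In fact the paper simply quotes Li--Ruan \cite{LR}, whose result holds for \emph{general} flops, not just $(-2)$-curve flops; the extension from primary insertions to descendants supported off the center (in particular, point descendants) is noted to be straightforward, since such insertions do not interact with the exceptional locus in the degeneration. So no re-derivation via Reid's factorization is required on the GW side, and the $(-2)$-curve hypothesis enters only through Proposition~\ref{mainresult} on the DT side. Your route would certainly work, but it is more labor than necessary and obscures the fact that the GW input is already available in greater generality.
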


We observe that Toda's flop formulae (Theorem 1.2 in \cite{T2}) are analogous to Proposition \ref{mainresult}, and we can check that Corollary \ref{BPS} and \ref{GWDT} also hold for general flops between Calabi-Yau $3$-folds. Based on the established flop formulae of \cite{HL}, \cite{T2} and ours, we propose the following conjecture.

\begin{conjecture}\label{conj}
The formulae \eqref{flop} and \eqref{center} hold for general flops.
\end{conjecture}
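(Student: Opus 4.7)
The plan is to reduce the general case to the local types of flopping contractions classified by Koll\'ar and Mori, and then deploy the degeneration-formula machinery that powered the proof of Proposition \ref{mainresult}. First I would invoke the Koll\'ar--Mori classification of terminal threefold flopping contractions: analytically locally the center of $f$ is a disjoint union of trees of rational curves, each tree being a simple flopping contraction of one of a finite list of standard types indexed by a length invariant. The $(-2)$-curve case treated in Proposition \ref{mainresult} is precisely the one where Reid's decomposition into blow-ups and blow-downs is available; for the higher-length types (Laufer, Morrison, iterated pagodas, etc.) this decomposition no longer applies, so new input is needed.

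For each standard type I would attempt to construct a semistable degeneration $\mathfrak{X}\to\mathbb{A}^1$ whose general fibre is $X$ and whose central fibre is a normal-crossing union $X_0\cup_D E$, with $E$ an explicit projective model of the flopping neighborhood on which the flop becomes transparent. Applying the Donaldson--Thomas degeneration formula of \cite{LW} together with the absolute/relative correspondence of \cite{HLR,MP}, one would express $Z'_{DT}(X;q|\prod_i\tilde\tau_{d_i}(\gamma_i))_\beta$ in terms of relative invariants on $E$, perform the parallel computation for $X'$, and match the two sides. Because the insertions are supported away from the center, their contributions factor out of the exceptional-fibre piece, and the flop symmetry collapses to the symmetry $\beta\mapsto -\mathcal{F}\beta$ on $Cen(f)$ of purely local relative invariants. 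Since a neighborhood of the center has trivial canonical bundle even when $X$ is not Calabi--Yau, that local symmetry should be accessible by a derived-category argument in the spirit of Toda \cite{T2}, now performed on the local Calabi--Yau model rather than globally.

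The main obstacle I anticipate is the higher-length case, especially the Laufer flop, where the exceptional curve carries scheme-theoretic multiplicity and the generator of $Cen(f)$ is not represented by a reduced embedded curve. The effectiveness analysis of Lemmas \ref{1stlemma} and \ref{2ndlemma} would then have to be redone on the Hilbert scheme of thickenings of the exceptional locus, and one must enumerate and match partition-type combinatorial data on the two sides of the flop, parallel to the local $(-1,-1)$ analysis of Pandharipande--Thomas. A secondary difficulty is globalizing the local derived equivalences to an identity of integrated DT series, which requires careful bookkeeping of contributions from classes that lie only partially inside the exceptional locus — precisely the bookkeeping which in the $(-2)$-curve case is organized by the denominators of \eqref{flop}.
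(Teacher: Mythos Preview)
The statement you are attempting to prove is Conjecture~\ref{conj}, and the paper does \emph{not} prove it: it is explicitly stated as an open conjecture, with only a brief remark afterward that the degeneration formula is expected to play a role, and that for $(1,-3)$-curves no blow-up/blow-down factorization is available (citing \cite{La} and \cite{Pi}). There is therefore no proof in the paper to compare your proposal against.

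What you have written is not a proof either, but a research outline. As such it is reasonable and in fact consonant with the paper's own expectations: you propose to use degeneration to a local model and the absolute/relative correspondence, which is exactly the mechanism behind Proposition~\ref{mainresult}, and you correctly single out the higher-length (e.g.\ Laufer) flops as the essential obstruction, matching the paper's remark that the $(1,-3)$ case cannot be factored into blow-ups and blow-downs. But several of your steps are genuine open problems rather than routine verifications. In particular: (i) constructing a semistable degeneration with central fibre $X_0\cup_D E$ where $E$ is an explicit projective model of an arbitrary flopping neighborhood is not known in general; (ii) the Toda-style derived-category argument you invoke for the local Calabi--Yau piece produces identities of generating series only after nontrivial wall-crossing input, and transporting those identities through the degeneration formula with descendent insertions has not been carried out; (iii) the effectiveness analysis replacing Lemmas~\ref{1stlemma} and \ref{2ndlemma} for curves with scheme-theoretic multiplicity is precisely the missing ingredient, not a technicality. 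So your proposal should be read as a plausible strategy for attacking the conjecture, not as a proof, and it does not go beyond what the paper already hints at.
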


We expect that the degeneration formula will play a role in the proof of the conjecture. Note that an embedded nonsingular rational curve in a $3$-fold is locally floppable only if it has normal bundle of type $(-1,-1),(0,-2)$ or $(1,-3)$ \cite{La}. However, unlike the case of $(-2)$-curves, it is difficult to describe a general flop of $(1,-3)$-curves in terms of blow-ups and blow-downs (see \cite{Pi} for some explicit examples).

Most of the results obtained in this paper also hold in the stable pair theory \cite{PT}, since the behavior of stable pair invariants under degeneration is similar to that of Donaldson-Thomas theory. We also have corresponding corollaries on BPS state counts and GW/P correspondence, and conjectural flop formulae for general flops in the stable pair theory.

An outline of this paper is as follows. In Section 2, we review some basic materials on Donaldson-Thomas invariants. In Section 3, we recall Reid's result  to decompose the flop under consideration into a sequence of blow-ups followed by a sequence of blow-downs, and use the degeneration formula to prove Proposition \ref{mainresult}. In Section 4, we give a working definition of the BPS state counts for Donaldson-Thomas theory and prove Corollary \ref{BPS}. In Section 5, we review the conjectural GW/DT corespondence and prove Corollary \ref{GWDT}.

\section{Preliminaries on Donaldson-Thomas invariants}
In this section, we briefly review some basic materials on Donaldson-Thomas invariants and fix notations.  We refer readers to \cite{DT, LW, MNOP1,MNOP2, Th} for details.

Donaldson-Thomas theory is defined via integration over the moduli space of ideal sheaves of $X$. Here an ideal sheaf is a torsion-free sheaf of rank $1$ with trivial determinant. Each ideal sheaf $\sI$ determines a subscheme $Y\subset X$ via the exact sequence
\ban
0\rightarrow\sI\rightarrow\cO_X\rightarrow\cO_Y\rightarrow 0.
\nan
In this note, we will consider only the case dim$Y\leqslant1$. The one dimensional components of $Y$ (weighted by their intrinsic multiplicities) determine an element,
\ban
[Y]\in H_2(X,\bZ).
\nan
For $n\in\bZ$ and $\beta\in H_2(X,\bZ)$, let $I_n(X,\beta)$ be the moduli space of ideal sheaves $\sI$ satisfying 
\ban
\chi(\cO_Y)=n,\quad [Y]=\beta,
\nan
where $\chi$ is the holomorphic Euler characteristic. From the deformation theory, $I_n(X,\beta)$ carries a virtual fundamental class of degree $\int_\beta c_1(X)$.

For $d\in\bZ_{\geqslant 0}$ and $\gamma\in H^*(X,\bQ)$, the descendant insertion $\tilde\tau_d(\gamma)$ is defined as follows. Let 
\ban
\pi_X:X\times I_n(X,\beta)&\rightarrow&X,\\
\pi_I:X\times I_n(X,\beta)&\rightarrow&I_n(X,\beta)
\nan
be tautological projections. Let $\cI$ be the universal sheaf over $X\times I_n(X,\beta)$. The operation
\ban
(-1)^{d+1}\pi_{I*}\bigg(\pi_X^*(\gamma)\cdot\textrm{ch}_{2+d}(\cI)\cap\pi_I^*(\cdot)\bigg):H_*(I_n(X,\beta),\bQ)\rightarrow H_*(I_n(X,\beta),\bQ)
\nan
is the action of $\tilde\tau_d(\gamma)$. The Donaldson-Thomas invariants with descendant insertions are defined as the virtual integration
\ban
\<\prod\limits_{i=1}^m\tilde\tau_{d_i}(\gamma_i)\>_{n,\beta}&=&\int_{[I_n(X,\beta)]^{vir}}\prod\limits_{i=1}^m\tilde\tau_{d_i}(\gamma_i),
\nan
where $d_1\cdots,d_m\in\bZ_{\geqslant 0}$, and $\gamma_1,\cdots,\gamma_m\in H^*(X,\bQ)$. Here the integral is the push-forward to a point of the class
\ban
\tilde\tau_{d_i}(\gamma_i)\circ\cdots\tilde\tau_{d_m}(\gamma_m)([I_n(X,\beta)]^{vir}).
\nan
The partition function of the Donaldson-Thomas invariants is defined by
\ban
Z_{DT}\bigg(X;q|\prod\limits_{i=1}^m\tilde\tau_{d_i}(\gamma_i)\bigg)_\beta=\sum\limits_{n\in\bZ}\<\prod\limits_{i=1}^m\tilde\tau_{d_i}(\gamma_i)\>_{n,\beta}q^n,
\nan
and the reduced partition function is obtained by formally removing the degree zero contributions,
\ban
Z'_{DT}\bigg(X;q|\prod\limits_{i=1}^m\tilde\tau_{d_i}(\gamma_i)\bigg)_\beta=\frac{Z_{DT}\bigg(X;q|\prod\limits_{i=1}^m\tilde\tau_{d_i}(\gamma_i)\bigg)_\beta}{Z_{DT}\bigg(X;q|\bigg)_0}.
\nan

Let $S\subset X$ be a nonsingular divisor. For $n\in\bZ$ and nonzero $\beta\in H_2(X,\bZ)$ with $\int_\beta[S]\geqslant 0$, let $I_n(X/S,\beta)$ be the moduli space of relative ideal sheaves, which carries a virtual fundamental class of degree $\int_\beta c_1(X)$. We have the following natural morphism
\ban
\epsilon:I_n(X/S,\beta)\rightarrow\textrm{Hilb}(S,\int_\beta[S])
\nan
The pull-back of cohomology classes of Hilb$(S,\int_\beta[S])$ gives relative insertions. 

Let us briefly recall Nakajima basis for the cohomology of Hilbert schemes of points of $S$. Let $\{\delta_i\}$ be a basis of $H^*(S,\bQ)$ with dual basis $\{\delta^i\}$. For any cohomology weighted partition $\eta$ with respect to the basis $\{\delta_i\}$, Nakajima constructed a cohomology class $C_\eta\in H^*(\textrm{Hilb}(S,|\eta|),\bQ)$. The Nakajima basis of  $H^*(\textrm{Hilb}(S,d),\bQ)$ is the set $\{C_\eta\}_{|\eta|=d}$. We refer readers to \cite{Na} for more details.

The partition function of the relative Donaldson-Thomas invariants is defined by
\ban
Z_{DT}\bigg(X/S;q|\prod\limits_{i=1}^m\tilde\tau_{d_i}(\gamma_i)|\eta\bigg)_\beta=\sum\limits_{n\in\bZ}q^n\int_{[I_n(X/S,\beta)]^{vir}}\prod\limits_{i=1}^m\tilde\tau_{d_i}(\gamma_i)\cdot\epsilon^*C_\eta,
\nan
and the reduced partition function is obtained by formally removing the degree zero contributions
\ban
Z'_{DT}\bigg(X/S;q|\prod\limits_{i=1}^m\tilde\tau_{d_i}(\gamma_i)|\eta\bigg)_\beta=\frac{Z_{DT}\bigg(X/S;q|\prod\limits_{i=1}^m\tilde\tau_{d_i}(\gamma_i)|\eta\bigg)_\beta}{Z_{DT}\bigg(X/S;q||\bigg)_0}
\nan 

Let $\Delta\subset\bC$ be the unit disc, and let $\pi : \chi \rightarrow\Delta$ be a nonsingular $4$-fold over $\bD$, such that $\chi_t = \pi^{-1}(t) \cong X $ for $t\not= 0$, and $\chi_0$ is a union of two irreducible nonsingular projective $3$-folds $X_1$
and $X_2$ intersecting transversally along a nonsingular projective surface $S$. (We can also consider the general case where the central fiber has several irreducible components, but we restrict ourselves to this simple case for simplicity of presentation.)  Consider the natural inclusion maps
$$
  i_t: X=\chi_t \longrightarrow \chi,\,\,\,\,\,\,\,\,
  i_0:\chi_0\longrightarrow \chi,
$$
and the gluing map
$$
  g= (j_1,j_2) : X_1\coprod X_2\longrightarrow \chi_0.
$$
We have
$$
  H_2(X,\bZ)\stackrel{i_{t*}}{\longrightarrow}
  H_2(\chi,\bZ)\stackrel{i_{0_*}}{\longleftarrow}
  H_2(\chi_0,\bZ)\stackrel{g_*}{\longleftarrow} H_2(X_1,\bZ)\oplus
  H_2(X_2,\bZ),
$$
where $i_{0*}$ is an isomorphism since there exists a deformation retract from $\chi$ to $\chi_0$(see \cite{Cl}). Also, since the family $\chi\rightarrow\bA^1$ comes from a trivial family, it follows that each $\gamma\in H^*(X,\bQ) $ has
global liftings such that the restriction $\gamma(t)$ on $\chi_t$ is defined for all $t$.

The degeneration formula for the Donaldson-Thomas theory expresses the absolute invariants of $X$ via the relative invariants of $(X_1,S)$ and $(X_2,S)$:
\ban
&&Z'_{DT}\bigg(X;q|\prod\limits_{i=1}^m\tilde\tau_{d_i}(\gamma_i)\bigg)_\beta\\
&=&\sum Z'_{DT}\bigg(X_1/S;q|\prod\limits_{i\in P_1}\tilde\tau_{d_i}(j_1^*\gamma_i(0))|\eta\bigg)_{\beta_1}\cdot\frac{(-1)^{|\eta|-\ell(\eta)}\fz(\eta)}{q^{|\eta|}}\cdot Z'_{DT}\bigg(X_2/S;q|\prod\limits_{i\in P_2}\tilde\tau_{d_i}(j_2^*\gamma_i(0))|\eta^\vee\bigg)_{\beta_2},
\nan
where $\fz(\eta)=|\textrm{Aut}(\eta)|\cdot\prod\limits_{i=1}^{\ell(\eta)}\eta_i$, $\eta^\vee$ is defined by taking the Poincar\'e duals of the cohomology weights of $\eta$, and the sum is over cohomology weighted partitions $\eta$, degree splittings $i_{t*}\beta=i_{0*}(j_{1*}\beta_1+j_{2*}\beta_2)$, and marking partitions $P_1\coprod P_2=\{1,\cdots,m\}$. In particular, if $(\eta,\beta_1,\beta_2)$ has nontrivial contribution in the degeneration formula, then we have the following dimension constraint:
\ban
v\textrm{dim}_\bC P_n(X_1/S,\beta_1)+v\textrm{dim}_\bC P_n(X_2/S,\beta_2)=v\textrm{dim}_\bC P_n(X,\beta)+2|\eta|.
\nan

\section{Proof of main result}

In this section, we give a detailed proof o Proposition \ref{mainresult}. We first recall Reid's result to decompose a flop of a disjoint union of  $(-2)$-curves into a sequence of blow-ups followed by a sequence of blow-downs, and then use the degeneration formula to prove our flop formula. We refer readers to \cite{Re} for explicit local description of the flop of a single $(-2)$-curve, and to \cite{Ko, KM} for general materials on birational geometry of $3$-folds.

Let $C_{1},\cdots,C_{l}$ be the irreducible components of the center of $f$. We can contract these curves to obtain a contraction $\psi:X\rightarrow\bar X$, and then these curves generate an extremal face in $NE(X)$. The width of $C_{i}$ in $X$ is defined by Reid as follows \cite{Re}:
\ban
w_{i}&:=&width(C_{i}\subset X)\\
&:=&\sup\{k|\textrm{ there exists  a scheme }S\cong C_{i}\times\textrm{Spec}(\bC[\epsilon]/\epsilon^k)\textrm{ such that }C_{i}\subset S\subset X\}.
\nan
Since $C_{i}$ is isolated, it follows that $1\leqslant w_{i}<\infty$. Note that $\psi(C_{i})\in\bar X$ is a hypersurface singularity given by
\ban
x^2+y^2+z^2+t^{2w_{i}}=0.
\nan
In particular, $C_{i}$ is a $(-1,-1)$-curve if and only if $w_{i}=1$.

Without loss of generality, assume that 
\ban
w_{1}\geqslant\cdots\geqslant w_{l}\geqslant 1.
\nan
Let $w=w_{1}$, and for $d=1,\cdots,w$, set
\ban
k_d:=\sup\{i|w_{i}\geqslant d\}.
\nan
Then 
\ban
1\leqslant k_w\leqslant\cdots\leqslant k_1=l.
\nan

Write $X=X_0$ and $C_i=C_{0,i}$. Then proceeding inductively, we obtain a sequence of blow-ups:
\ban
X_w\xrightarrow{\phi_{w-1}}X_{w-1}\xrightarrow{\phi_{w-2}}\cdots\xrightarrow{\phi_1}X_1\xrightarrow{\phi_0}X_0. 
\nan
Here for $d=0,1,\cdots,w-2$, $\phi_d$ is the blow-up of $X_{d}$ along the $(-2)$-curves $C_{d,1},\cdots,C_{d,k_{d+1}}$. Let
\ban
E_{d+1,i}:=\phi_d^{-1}(C_{d,i})\cong\left\{\begin{array}{cl}\bF_2,&i=1,\cdots,k_{d+2},\\\bF_0,&i=k_{d+1}+1,\cdots,k_{d+1}.\end{array}\right.
\nan
For $i=1,\cdots,k_{d+2}$, $C_{d+1,i}\subset E_{d+1,i}$ is the unique nonsingular rational curve with negative self intersection number, which is also a $(-2)$-curve in $X_{d+1}$ with
\ban
width(C_{d+1,i}\subset X_{d+1})=w_{i}-d-1,\quad d=1,\cdots,w-2.
\nan
Moreover, $\phi_{w-1}$ is the blow-up of $X_{w-1}$ along the $(-1,-1)$-curves $C_{w-1,1},\cdots,C_{w-1,k_{w}}$, and
\ban
E_{w,i}:=\phi_{w-1}^{-1}(C_{w-1,i})\cong\bF_0,\quad i=1,\cdots,k_w.
\nan

For $d=1,\cdots,w-1$ and $i=1,\cdots,k_{d+1}$, the strict transform of $E_{d,i}$ under $\phi_d$, denoted by $\tilde E_{d,i}$, is isomorphic to $E_{d,i}$. Moreover, $\tilde E_{d,i}\cap E_{d+1,i}$ is a nonsingular rational curve, which has negative self intersection number on $\tilde E_{d,i}$, and self intersection number $2$ on $E_{d+1,i}$. In particular, $\tilde E_{w-1,i}\cap E_{w,i}$ is a $(1,1)$-curve on $E_{w,i}\cong\bF_0$. Note that $\tilde E_{d,i}$ is not affected by blow-ups $\phi_{d+1},\cdots,\phi_{w-1}$, and can be viewed as an embedded surface in $X_w$. For $d=1,\cdots,w-1$ and $i=k_{d+1}+1,\cdots,k_{d}$, $E_{d,i}$ is not affected by blow-ups $\phi_d,\cdots,\phi_{w-1}$, and can be viewed as an embedded surface in $X_w$.

Write $E_{w,i}=E'_{w,i}$. Since each $E'_{w,i}\cong\bF_0$ has a ruling not contracted by $\phi_{w-1}$, it follows that we can blow down $X_w$ along these rulings for all $i$ simultaneously to obtain $\phi'_{w-1}:X_w\rightarrow X'_{w-1}$. 
Proceeding inductively, we also have a sequence of blow-downs:
\ban
X_w\xrightarrow{\phi'_{w-1}}X'_{w-1}\xrightarrow{\phi'_{w-2}}\cdots\xrightarrow{\phi'_1}X'_1\xrightarrow{\phi'_0}X'_0. 
\nan
For $d=0,1,\cdots,w-2$, let
\ban
C'_{w-1-d,i}:=\phi'_{w-1-d}(E'_{w-d,i}),\quad i=1,\cdots,k_{w-d}
\nan
and 
\ban
E'_{w-1-d,i}=\left\{\begin{array}{cl}\phi'_{w-1-d}\circ\cdots\circ\phi'_{w-1}(\tilde E_{w-1-d,i})\cong\bF_2,&i=1,\cdots,k_{w-d},\\\phi'_{w-1-d}\circ\cdots\circ\phi'_{w-1}(E_{w-1-d,i})\cong\bF_0&i=k_{w-d}+1,\cdots,k_{w-1-d}.\end{array}\right.
\nan
Then $C'_{w-1-d,i}$ is a $(-2)$-curve in $X'_{w-1-d}$ with
\ban
width(C'_{w-1-d,i}\subset X'_{w-1-d})=w_i-w+1+d,
\nan
and $C'_{w-1-d,i}\subset E'_{w-1-d,i}$ is the unique nonsingular rational curve with negative self intersection number.  Since for $i=1,\cdots,k_{w-d}$, each $E'_{w-1-d,i}\cong\bP_{C'_{w-1-d,i}}(\cO\oplus\cO(-2))$ has a fiber ruling, and for $i=k_{w-d}+1,\cdots,k_{w-1-d}$, each $E_{w-1-d,i}$ has a ruling not contracted by $\phi_{w-1-d}$, it follows that we can blow down $X'_{w-1-d}$ along these rulings simultaneously to obtain $\phi'_{w-2-d}:X'_{w-1-d}\rightarrow X'_{w-2-d}$.

Now for $d=0,1,\cdots,w-1$, the birational map
\ban
f_d:=\phi'_d\circ\cdots\circ\phi'_{w-1}\circ\phi^{-1}_{w-1}\circ\cdots\circ\phi^{-1}_d:X_d\dashrightarrow X'_d
\nan 
is a flop of $(-2)$-curves $C_{d,1},\cdots,C_{d,k_{d+1}}$, where $C_{d,i}$ is flopped to $C'_{d,i}$. In particular, we have $X'=X'_0$ and $f=f_0$.

Degenerate $X$ along $C_1,\cdots,C_l$ simultaneously, and we have
\ban
&&Z'_{DT}\bigg(X;q|\prod\limits_{i=1}^m\tilde\tau_{d_i}(\gamma_i)\bigg)_\beta\\
&=&\sum Z'_{DT}\bigg(X_1/E_1;q|\prod\limits_{i=1}^m\tilde\tau_{d_i}(\phi_0^*\gamma_i)|\eta_1^\vee,\cdots,\eta_l^\vee\bigg)_{\tilde\beta}\prodl_{i=1}^l\frac{(-1)^{|\eta_i|-\ell(\eta_i)}\fz(\eta_i)}{q^{|\eta_i|}}Z'_{DT}(\bP_i/D_i;q||\eta_i)_{\beta_i},
\nan
where we have assumed that the support of $\gamma_i$ is away from $\bigcup\limits_{i=1}^lC_i$, and 
\ban
E_1&:=&\bigcup\limits_{i=1}^lE_{1,i},\\
\bP_i&:=&\bP_{C_i}(N_{C_i}\oplus\cO_{C_i}),\quad (N_{C_i}\textrm{ is the normal bundle of }C_i\textrm{ in }X)\\
D_i&:=&\bP_{C_i}(N_{C_i}\oplus\{0\}).
\nan
By dimension constraint, we find that $\eta_1=\cdots=\eta_l=\emptyset$. So 
\ban
\tilde\beta\cdot E_1=\beta_i\cdot D_i=0.
\nan
For $\tilde\beta$, note that $\phi_0$ induces a natural injection via 'pull-back' of $2$-cycles 
\ban
\phi_1^!=PD_{X_1}\circ\phi_0^*\circ PD_X:H_2(X,\bZ)\rightarrow H_2(X_1,\bZ),
\nan
where the image of $\phi_0^!$ is the subset of $H_2(X_1,\bZ)$ consisting of $2$-cycles having intersection number zero with $E_1$, and so we have
$\tilde\beta\in Im\phi_0^!$. For $\beta_i$, note that
\ban
H_2(\bP_i,\bZ)=\bZ[C_i]\oplus\bZ f_i,
\nan
where we have used the identification $C_i\cong\bP_{C_i}(\{0\}\oplus\cO_{C_i})$, and $f_i$ is the class of a line in the fiber of $\bP_i$. So $\beta_i\cdot D_i=0$ implies that $\beta_i\in\bZ_{\geqslant0}[C_i]$, since $\beta_i$ is effective. Therefore
\ban
&&Z'_{DT}\bigg(X;q|\prod\limits_{i=1}^m\tilde\tau_{d_i}(\gamma_i)\bigg)_\beta\\
&=&\suml_{\substack{\beta'\in H_2(X,\bZ),n_i\in\bZ_{\geqslant 0}\\\beta'+n_1[C_1]+\cdots n_l[C_l]=\beta}} Z'_{DT}\bigg(X_1/E_1;q|\prod\limits_{i=1}^m\tilde\tau_{d_i}(\phi_0^*\gamma_i)|\bigg)_{\phi_0^!\beta'}\prodl_{i=1}^lZ'_{DT}(\bP_i/D_i;q||)_{n_i[C_i]}.
\nan
In particular, since the irreducible curves in the center of $f$ generate an extremal face in $NE(X)$, it follows that for $\beta\in Cen(f)$, we have
\ban
Z'_{DT}\bigg(X;q|\bigg)_{\beta}=\suml_{\substack{\beta'+\suml_{i=1}^ln_i[C_i]=\beta\\\beta'\in Cen(f)}} Z'_{DT}\bigg(X_1/E_1;q||\bigg)_{\phi_0^!\beta'}\prodl_{i=1}^lZ'_{DT}(\bP_i/D_i;q||)_{n_i[C_i]}.
\nan
Therefore we have obtained the following:
\ba
\suml_{\beta\in H_2(X,\bZ)}v^\beta Z'_{DT}\bigg(X;q|\prod\limits_{i=1}^m\tilde\tau_{d_i}(\gamma_i)\bigg)_\beta&=&\suml_{\beta\in H_2(X,\bZ)}v^\beta Z'_{DT}\bigg(X_1/E_1;q|\prod\limits_{i=1}^m\tilde\tau_{d_i}(\phi_0^*\gamma_i)|\bigg)_{\phi_0^!\beta}\nonumber\\
&&\qquad\cdot\prodl_{i=1}^l\suml_{d\geqslant 0}v^{d[C_i]}Z'_{DT}(\bP_i/D_i;q||)_{d[C_i]},\nonumber\\
\suml_{\beta\in Cen(f)}v^{\beta}Z'_{DT}(X;q|)_{\beta}&=&\suml_{\beta\in Cen(f)}v^{\beta}Z'_{DT}\bigg(X_1/E_1;q||\bigg)_{\phi_0^!\beta}\cdot\prodl_{i=1}^l\suml_{d\geqslant 0}v^{d[C_i]}Z'_{DT}(\bP_i/D_i;q||)_{d[C_i]},\label{Xcenter}
\na
which implies that
\ba
\frac{\suml_{\beta\in H_2(X,\bZ)}v^\beta Z'_{DT}\bigg(X;q|\prod\limits_{i=1}^m\tilde\tau_{d_i}(\gamma_i)\bigg)_\beta}{\suml_{\beta\in Cen(f)}v^{\beta}Z'_{DT}(X;q|)_{\beta}}&=&\frac{\suml_{\beta\in H_2(X,\bZ)}v^\beta Z'_{DT}\bigg(X_1/E_1;q|\prod\limits_{i=1}^m\tilde\tau_{d_i}(\phi_0^*\gamma_i)|\bigg)_{\phi_0^!\beta}}{\suml_{\beta\in Cen(f)}v^{\beta}Z'_{DT}\bigg(X_1/E_1;q||\bigg)_{\phi_0^!\beta}}.\label{Xrel}
\na

Now degenerate $X_1$ along $E_{1,1},\cdots,E_{1,l}$ simultaneously, and we obtain
\ban
&&Z'_{DT}\bigg(X_1;q|\prod\limits_{i=1}^m\tilde\tau_{d_i}(\phi_0^*\gamma_i)\bigg)_{\phi_1^!\beta}\\
&=&\sum Z'_{DT}\bigg(X_1/E_1;q|\prod\limits_{i=1}^m\tilde\tau_{d_i}(\phi_0^*\gamma_i)|\eta_1^\vee,\cdots,\eta_l^\vee\bigg)_{\tilde\beta}\prodl_{i=1}^l\frac{(-1)^{|\eta_i|-\ell(\eta_i)}\fz(\eta_i)}{q^{|\eta_i|}}Z'_{DT}(\bP_{1,i}/D_{1,i};q||\eta_i)_{\beta_i},
\nan
where
\ban
\bP_{1,i}&:=&\bP_{E_{1,i}}(N_{E_{1,i}}\oplus\cO_{E_{1,i}}),\quad (N_{E_{1,i}}\textrm{ is the normal bundle of }E_{1,i}\textrm{ in }X_1)\\
D_{1,i}&:=&\bP_{E_{1,i}}(N_{E_{1,i}}\oplus\{0\}).
\nan
By dimension constraint, we find that $\eta_1=\cdots=\eta_l=\emptyset$. So we have
\ban
&&Z'_{DT}\bigg(X_1;q|\prod\limits_{i=1}^m\tilde\tau_{d_i}(\phi_0^*\gamma_i)\bigg)_{\phi_1^!\beta}\\
&=&\suml_{\substack{\beta'\in H_2(X,\bZ),\beta_i\in H_2(\bP_{1,i},\bZ)\\\phi_0^!\beta'+(\pi_{1,1})_*\beta_1+\cdots+(\pi_{1,l})_*\beta_l=\phi_0^!\beta\\\beta_i\cdot E_{1,i}=\beta_i\cdot D_i=0}} Z'_{DT}\bigg(X_1/E_1;q|\prod\limits_{i=1}^m\tilde\tau_{d_i}(\phi_0^*\gamma_i)|\bigg)_{\phi_0^!\beta'}\prodl_{i=1}^{l}Z'_{DT}(\bP_{1,i}/D_{1,i};q||)_{\beta_i},
\nan
where we have used the identification $E_{1,i}\cong\bP_{E_{1,i}}(\{0\}\oplus\cO_{E_{1,i}})$, and $\pi_{1,i}$ is the composition
\ban
\bP_{1,i}\rightarrow E_{1,i}\hookrightarrow X_1.
\nan
In particular, since
\ban
\beta'+(\phi_0)_*(\pi_{1,1})_*\beta_1+\cdots+(\phi_0)_*(\pi_{1,l})_*\beta_l=\beta,
\nan
it follows that for $\beta\in Cen(f)$, we have
\ban
&&Z'_{DT}\bigg(X_1;q|\bigg)_{\phi_0^!\beta}\\
&=&\suml_{\substack{\beta'+(\phi_0)_*(\pi_{1,1})_*\beta_1+\cdots+(\phi_0)_*(\pi_{1,l})_*\beta_l=\beta\\\beta'\in Cen(f)\\\beta_i\cdot E_{1,i}=\beta_i\cdot D_{1,i}=0}} Z'_{DT}\bigg(X_1/E_1;q||\bigg)_{\phi_0^!\beta'}\prodl_{i=1}^lZ'_{DT}(\bP_{1,i}/D_{1,i};q||)_{\beta_i},
\nan
So we have obtained
\ba
\suml_{\beta\in H_2(X,\bZ)}v^{\beta} Z'_{DT}\bigg(X_1;q|\prod\limits_{i=1}^m\tilde\tau_{d_i}(\phi_0^*\gamma_i)\bigg)_{\phi_0^!\beta}&=&\suml_{\beta\in H_2(X,\bZ)}v^{\beta} Z'_{DT}\bigg(X_1/E_1;q|\prod\limits_{i=1}^m\tilde\tau_{d_i}(\phi_0^*\gamma_i)|\bigg)_{\phi_0^!\beta}\nonumber\\
&&\qquad\cdot\prodl_{i=1}^{l}\suml_{d\geqslant0}v^{d[C_i]}\suml_{\substack{\beta_i\in H_2(\bP_{1,i},\bZ)\\\beta_i\cdot E_{1,i}=\beta_i\cdot D_{1,i}=0\\(\phi_0)_*(\pi_{1,i})_*\beta_i=d[C_i]}}Z'_{DT}(\bP_{1,i}/D_{1,i};q||)_{\beta_i},\nonumber\\
\suml_{\beta\in Cen(f)}v^{\beta}Z'_{DT}(X_1;q|)_{\phi_0^!\beta}&=&\suml_{\beta\in Cen(f)}v^{\beta}Z'_{DT}\bigg(X_1/E_1;q||\bigg)_{\phi_0^!\beta}\label{X_1center}\\
&&\qquad\cdot\prodl_{i=1}^{l}\suml_{d\geqslant0}v^{d[C_i]}\suml_{\substack{\beta\in H_2(\bP_{1,i},\bZ)\\\beta\cdot E_{1,i}=\beta\cdot D_{1,i}=0\\(\phi_0)_*(\pi_{1,i})_*\beta=d[C_i]}}Z'_{DT}(\bP_{1,i}/D_{1,i};q||)_{\beta},\nonumber
\na
which implies that 
\ba
&&\frac{\suml_{\beta\in H_2(X,\bZ)}v^{\beta} Z'_{DT}\bigg(X_1;q|\prod\limits_{i=1}^m\tilde\tau_{d_i}(\phi_0^*\gamma_i)\bigg)_{\phi_0^!\beta}}{\suml_{\beta\in Cen(f)}v^{\beta}Z'_{DT}(X_1;q|)_{\phi_0^!\beta}}\nonumber\\
&=&\frac{\suml_{\beta\in H_2(X,\bZ)}v^{\beta} Z'_{DT}\bigg(X_1/E_1;q|\prod\limits_{i=1}^m\tilde\tau_{d_i}(\phi_0^*\gamma_i)|\bigg)_{\phi_0^!\beta}}{\suml_{\beta\in Cen(f)}v^{\beta}Z'_{DT}\bigg(X_1/E_1;q||\bigg)_{\phi_0^!\beta}}.\label{X_1rel}
\na
Then from \eqref{Xrel} and \eqref{X_1rel}, we have
\ba
\frac{\suml_{\beta\in H_2(X,\bZ)}v^\beta Z'_{DT}\bigg(X;q|\prod\limits_{i=1}^m\tilde\tau_{d_i}(\gamma_i)\bigg)_\beta}{\suml_{\beta\in Cen(f)}v^{\beta}Z'_{DT}(X;q|)_{\beta}}&=&\frac{\suml_{\beta\in H_2(X,\bZ)}v^{\beta} Z'_{DT}\bigg(X_1;q|\prod\limits_{i=1}^m\tilde\tau_{d_i}(\phi_0^*\gamma_i)\bigg)_{\phi_0^!\beta}}{\suml_{\beta\in Cen(f)}v^{\beta}Z'_{DT}(X_1;q|)_{\phi_0^!\beta}}.\label{X}
\na
Using the identification $\cF:H_2(X,\bZ)\xrightarrow{\cong}H_2(X',\bZ)$, we also have
\ba
&&\frac{\suml_{\beta\in H_2(X,\bZ)}v^\beta Z'_{DT}\bigg(X';q|\prod\limits_{i=1}^m\tilde\tau_{d_i}(\cF\gamma_i)\bigg)_{\cF\beta}}{\suml_{\beta\in Cen(f)}v^{\beta}Z'_{DT}(X';q|)_{\cF\beta}}\nonumber\\
&=&\frac{\suml_{\beta\in H_2(X,\bZ)}v^{\beta} Z'_{DT}\bigg(X'_1;q|\prod\limits_{i=1}^m\tilde\tau_{d_i}(((\phi'_0)^*\cF\gamma_i)\bigg)_{((\phi'_0)^!\cF\beta}}{\suml_{\beta\in Cen(f)}v^{\beta}Z'_{DT}(X'_1;q|)_{(\phi'_0)^!\cF\beta}}.\label{X'}
\na

Now we use induction on $w=1,2,3,\cdots$ to prove \eqref{flop} in Proposition \ref{mainresult}. For $w=1$, we have the following observation.
\begin{lemma}\label{1stlemma}
For any nonzero $\beta\in Cen(f)$, $\phi_0^!\beta$ is not effective. 
\end{lemma}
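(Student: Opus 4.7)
The plan is to argue by contradiction. Suppose $\phi_0^!\beta = [Z]$ for some effective $1$-cycle $Z$ in $X_1$ and some nonzero $\beta = \sum_{i=1}^l n_i[C_i] \in Cen(f)$. In the $w=1$ case each $C_i$ is a $(-1,-1)$-curve and each exceptional divisor $E_{1,i} = \phi_0^{-1}(C_i) \cong \bF_0$; let $[f_i]$ and $[s_i]$ denote the classes of a fiber and a section of $\pi_{1,i}\colon E_{1,i}\to C_i$. Using $N_{E_{1,i}/X_1} = \cO_{E_{1,i}}(-1)$ one computes $[E_{1,i}]\cdot[f_i] = [E_{1,i}]\cdot[s_i] = -1$, so the class $[s_i]-[f_i]$ has zero intersection with every $E_{1,j}$ and pushes forward by $\phi_0$ to $[C_i]$; the two defining properties of $\phi_0^!$ then force
\[
\phi_0^![C_i] \;=\; [s_i]-[f_i], \qquad \phi_0^!\beta \;=\; \sum_{i=1}^l n_i\bigl([s_i]-[f_i]\bigr).
\]
Combined with the standard blow-up formula, this yields the direct-sum decomposition $H_2(X_1,\bZ) = \phi_0^! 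H_2(X,\bZ)\oplus\bigoplus_{i=1}^l \bZ[f_i]$, inside which $[s_i] = \phi_0^![C_i]+[f_i]$.

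Next I would split $Z = Z^{E} + Z^{\mathrm{out}}$, where $Z^{E}$ collects the irreducible components contained in $E := \bigcup_i E_{1,i}$ and $Z^{\mathrm{out}} = \sum_k m_k[\tilde D_k]$ collects the proper transforms of irreducible curves $D_k\subset X$ not contained in $\bigcup_i C_i$. Since irreducible curves in $\bF_0$ have nonnegative coordinates in the basis $([s_j],[f_j])$, one can write $Z^{E} = \sum_j(A_j[s_j]+B_j[f_j])$ with $A_j,B_j\geqslant 0$. Pushing forward by $\phi_0$ gives
\[
\beta \;=\; (\phi_0)_*[Z] \;=\; \sum_j A_j[C_j] + \sum_k m_k[D_k],
\]
so the effective class $\sum_k m_k[D_k]$ lies in the subgroup $Cen(f)$. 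Since the $C_i$ span an extremal face $F$ of $NE(X)$ (as recalled just before the lemma via the contraction $\psi\colon X\to\bar X$), extremality of $F$ forces each $[D_k] \in F$; but an irreducible curve with class in $F$ is contracted by $\psi$ and hence, since the exceptional locus of $\psi$ is exactly $\bigcup_i C_i$, must coincide with some $C_i$, contradicting the choice of $D_k$. Therefore $Z^{\mathrm{out}} = 0$ and $Z$ is supported on $E$.

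Finally, rewriting
\[
[Z] \;=\; \sum_j(A_j[s_j]+B_j[f_j]) \;=\; \phi_0^!\Bigl(\sum_j A_j[C_j]\Bigr) + \sum_j(A_j+B_j)[f_j]
\]
and comparing with $\phi_0^!\beta$ inside the direct sum $\phi_0^! H_2(X,\bZ)\oplus\bigoplus_j\bZ[f_j]$, the $[f_j]$-coefficient of $\phi_0^!\beta$ vanishes, forcing $A_j+B_j = 0$; together with $A_j,B_j\geqslant 0$ this gives $A_j = B_j = 0$, and then the pulled-back part of the comparison yields $\beta = \sum_j A_j[C_j] = 0$, contradicting $\beta\neq 0$. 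The main obstacle is the extremality step that eliminates $Z^{\mathrm{out}}$; once that is in place, the remaining argument is bookkeeping inside the blow-up cohomology.
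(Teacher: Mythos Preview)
Your argument is correct, but it takes a genuinely different route from the paper's. The paper's proof is a two-line flop-symmetry argument: assuming $\phi_0^!\beta$ effective, push it forward along both blow-downs $\phi_0:X_1\to X$ and $\phi'_0:X_1\to X'$. The first gives $\beta=\sum a_i[C_i]$ with $a_i\geqslant 0$; the second gives $(\phi'_0)_*\phi_0^!\beta=\cF\beta=-\sum a_i[C'_i]$, which is minus a nonzero effective class on $X'$ and hence not effective, contradicting effectiveness of $\phi_0^!\beta$.

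By contrast, you never touch $X'$ or $\cF$: you work entirely inside $X_1$, computing $\phi_0^![C_i]=[s_i]-[f_i]$ explicitly and then using the extremal-face property of $\{[C_i]\}$ to force any effective representative of $\phi_0^!\beta$ to be supported on $E_1$, after which a coefficient comparison in $H_2(X_1,\bZ)=\phi_0^!H_2(X,\bZ)\oplus\bigoplus_i\bZ[f_i]$ finishes it. The paper's approach is shorter and exploits the flop structure in an essential way (and generalizes immediately to the analogous step on the $X'$ side). Your approach is more hands-on but self-contained, in the sense that it only uses the blow-up geometry of $(-1,-1)$-curves and the extremality of the contracted face, without invoking the flopped variety at all. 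Both arguments rely on extremality at a key step: the paper uses it to write $\beta$ with nonnegative coefficients, you use it to eliminate $Z^{\mathrm{out}}$.
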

\begin{proof}
Argue by contradiction, and then $\beta=(\phi_0)_*\phi_0^!\beta$ is also effective. We can write $\beta=\suml_{i=1}^la_i[C_i]$ with $a_i\in\bZ_{\geqslant 0}$. Note that $\cF[C_i]=-[C'_i]$, and then
\ban
(\phi'_0)_*\phi_0^!\beta=\cF\beta=-\suml_{i=1}^la_i[C'_i].
\nan
Since $\suml_{i=1}^la_i[C'_i]$ is effective, it follows that $(\phi'_0)_*\phi_0^!\beta$ is not effective, which implies that $\phi_0^!\beta$ is not effective.
\end{proof}
Therefore, 
\ban
\suml_{\beta\in Cen(f)}v^{\beta}Z'_{DT}(X_1;q|)_{\phi_0^!\beta}=1\textrm{ and }\suml_{\beta'\in Cen(f^{-1})}v^{\beta'}Z'_{DT}(X_1;q|)_{\phi_0^!\beta'}=1.
\nan
Note that in \eqref{X} and \eqref{X'}, we have
\ban
\phi_0^*\gamma_i=(\phi'_0)^*\cF\gamma_i\textrm{ and }\phi_0^!\beta=(\phi'_0)^!\cF\beta.
\nan
So in the case $w=1$, \eqref{flop} follows from \eqref{X} and \eqref{X'}.

Assume that the case for $w=W\geqslant1$ is proved. Then for $w=W+1$, we have
\ban
\frac{\suml_{\beta_1\in H_2(X_1,\bZ)}v^{\beta_1} Z'_{DT}(X_1;q|\prodl_{i=1}^m\tilde\tau_{d_i}(\phi_0^*\gamma_i))_{\beta_1}}{\suml_{\beta_1\in Cen(f_1)}v^{\beta_1} Z'_{DT}(X_1;q|)_{\beta_1}}&=&\frac{\suml_{\beta_1\in H_2(X_1,\bZ)}v^{\beta_1} Z'_{DT}(X'_1;q|\prodl_{i=1}^m\tilde\tau_{d_i}(\cF_1\phi_0^*\gamma_i))_{\cF_1\beta_1}}{\suml_{\beta_1\in Cen(f_1)}v^{\beta_1} Z'_{DT}(X'_1;q|)_{\cF_1\beta_1}},
\nan
where $\cF_1$ is the correspondence on (co)homology groups induced by $f_1$. We have the following key observation.

\begin{lemma}\label{2ndlemma}
Let $S=$Span$_\bZ\{[C_1],\cdots,[C_{k_2}]\}$. For any $\beta\in Cen(f)\setminus S$, $\phi_0^!\beta$ is not effective.
\end{lemma}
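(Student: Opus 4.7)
The plan is to argue by contradiction: suppose $\phi_0^!\beta$ is represented by an effective $1$-cycle $Z$ on $X_1$, and derive $\beta\in S$ from this. The strategy has three pieces: an explicit computation of $\phi_0^![C_i]$ for each $i$; a support analysis forcing $Z$ to lie in the exceptional locus; and coefficient matching in the direct-sum decomposition
\[
H_2(X_1,\bZ)=\phi_0^!H_2(X,\bZ)\oplus\bigoplus_{j=1}^{l}\bZ[\mathrm{fib}_j],
\]
where $[\mathrm{fib}_j]$ denotes the class of a fiber of $E_{1,j}\to C_j$.

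First I would compute $\phi_0^![C_i]$ using its characterization as the unique $\alpha\in H_2(X_1,\bZ)$ with $\alpha\cdot E_{1,j}=0$ for every $j$ and $\phi_{0*}\alpha=[C_i]$. For $i\leqslant k_2$, where $C_i$ has normal bundle $\cO\oplus\cO(-2)$ and $E_{1,i}\cong\bF_2$, the $(-2)$-section $C_{1,i}$ satisfies both conditions (its restriction of $N_{E_{1,i}/X_1}=\cO_{E_{1,i}}(-1)$ is trivial, giving $C_{1,i}\cdot E_{1,i}=0$), so $\phi_0^![C_i]=[C_{1,i}]$, which is \emph{effective}. For $i>k_2$, where $C_i$ is a $(-1,-1)$-curve and $E_{1,i}\cong\bP^1\times\bP^1$ with $N_{E_{1,i}/X_1}=\cO(-1,-1)$, writing $[e_i]$ for the fiber class of $\phi_0|_{E_{1,i}}$ and $[g_i]$ for the complementary ruling, both intersect $E_{1,i}$ in $-1$ but only $[g_i]$ pushes forward to $[C_i]$; the unique valid combination is $\phi_0^![C_i]=[g_i]-[e_i]$, whose negative coefficient on $[e_i]$ is the decisive obstruction to effectiveness.

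Next I would rule out components of $Z$ outside the exceptional locus. Writing $Z=Z^{\mathrm{far}}+Z^{\mathrm{near}}$ with $Z^{\mathrm{near}}$ supported in $\bigcup_{j}E_{1,j}$, we have $\phi_{0*}[Z^{\mathrm{near}}]\in Cen(f)$, so $\phi_{0*}[Z^{\mathrm{far}}]=\beta-\phi_{0*}[Z^{\mathrm{near}}]$ is an effective class lying in $Cen(f)=\ker\psi_*$. Since any effective cycle in $\ker\psi_*$ is supported on the exceptional locus $\bigsqcup_{j}C_j$ of $\psi$ (a standard fact using that $\psi|_{X\setminus\bigsqcup C_j}$ is an isomorphism onto a subset of the projective variety $\bar X$), while $\phi_0(Z^{\mathrm{far}})$ is disjoint from it, we conclude $Z^{\mathrm{far}}=0$. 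Now decompose $Z=\sum_{j}Z_j$ with $Z_j\subset E_{1,j}$ effective, writing $[Z_j]=r_j[C_{1,j}]+s_j[f_{1,j}]$ for $j\leqslant k_2$ and $[Z_j]=r_j[g_j]+s_j[e_j]$ for $j>k_2$, with $r_j,s_j\geqslant 0$ in both cases. In the basis above, $[Z_j]$ equals $r_j\phi_0^![C_j]+s_j[\mathrm{fib}_j]$ for $j\leqslant k_2$ and $r_j\phi_0^![C_j]+(r_j+s_j)[\mathrm{fib}_j]$ for $j>k_2$, while $\phi_0^!\beta$ lies entirely in $\phi_0^!H_2(X,\bZ)$. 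Equating the $\bigoplus_j\bZ[\mathrm{fib}_j]$-components and using the linear independence of $[\mathrm{fib}_1],\ldots,[\mathrm{fib}_l]$ yields $s_j=0$ for $j\leqslant k_2$ and $r_j+s_j=0$ for $j>k_2$; combined with the nonnegativity of $r_j,s_j$, this forces $r_j=s_j=0$ for every $j>k_2$. Equating the $\phi_0^!H_2(X,\bZ)$-components and applying the injectivity of $\phi_0^!$ then yields $\sum_{j\leqslant k_2}r_j[C_j]=\beta$, so $\beta\in S$ — contradicting the hypothesis.

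The main technical hurdle will be the identification of the negative sign in $\phi_0^![C_i]=[g_i]-[e_i]$ for $(-1,-1)$-curves, which amounts to a short intersection computation on $\bP^1\times\bP^1$ using $N_{E_{1,i}/X_1}=\cO(-1,-1)$ and the characterizing properties of $\phi_0^!$. Once this sign is isolated, its incompatibility with the nonnegativity of coefficients of effective curves on $\bF_0$ drives the whole argument; the remaining ingredients — the direct-sum decomposition of $H_2(X_1,\bZ)$ for disjoint blow-ups of smooth curves in a smooth $3$-fold, and the support property of effective cycles in $\ker\psi_*$ — are standard.
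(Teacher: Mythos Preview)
Your argument is correct. It differs from the paper's mainly in packaging: you work coordinate-wise in the decomposition $H_2(X_1,\bZ)=\phi_0^!H_2(X,\bZ)\oplus\bigoplus_j\bZ[\mathrm{fib}_j]$, first computing $\phi_0^![C_i]$ explicitly and then matching fiber-coefficients, whereas the paper bypasses the explicit formula for $\phi_0^![C_i]$ and argues directly with the single intersection number $\phi_0^!\beta\cdot E_1$. After observing (as you do, via the extremal-face property) that every irreducible component $V_j$ of an effective representative lies in some $E_{1,i}$, the paper simply notes that $V_j\cdot E_{1,i}\leqslant 0$ always (since $N_{E_{1,i}/X_1}$ is anti-nef on $E_{1,i}$), with strict inequality whenever $E_{1,i}\cong\bF_0$; since $\beta\notin S$ forces at least one $V_j$ into such an $E_{1,i}$, summing gives $\phi_0^!\beta\cdot E_1<0$, contradicting $\phi_0^!\beta\cdot E_1=0$. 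Your per-$j$ constraint $r_j+s_j=0$ for $j>k_2$ is precisely the refinement $[Z_j]\cdot E_{1,j}=0$ of this same inequality, so the two proofs are really dual presentations of the same geometry. The paper's route is shorter and avoids the basis bookkeeping; yours makes the structural distinction between the $\bF_2$ and $\bF_0$ exceptional divisors --- and the decisive negative sign in $\phi_0^![C_i]=[g_i]-[e_i]$ for $(-1,-1)$-curves --- fully transparent.
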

\begin{proof}
Without loss of generality, assume that 
\ban
S\cap\{[C_{k_2+1}],\cdots,[C_l]\}=\{[C_{l'}],\cdots,[C_l]\}.
\nan
Argue by contradiction, and we can write $\phi_0^!\beta=\suml_{j=1}^nm_j[V_j]$, where $m_j\in\bZ_{\geqslant0}$, and $V_1,\cdots,V_n$ are mutually distinct irreducible curves in $X_1$. Since 
\ban
\beta=(\phi_0)_*\phi_0^!\beta\in Cen(f)\setminus S,
\nan
and $[C_1],\cdots,[C_l]$ generate an extremal face in $NE(X)$, it follows that, for each $j$, $V_j$ is mapped onto a point or some $C_i$. In the former case, $V_j$ is a fiber of one irreducible component of $E_1$ and then $V_j\cdot E_1<0$. In the latter case, $V_j$ is contained in $E_{1,i}$ and then $V_j\cdot E_{1,i}\leqslant 0$. Moreover, we can find some $V_j$ which is contained in some $E_{1,i}\cong\bF_0$ for $l'\leqslant i\leqslant l$, and then $V_j\cdot E_{1,i}<0$. In sum, we have $\phi_0^!\beta\cdot E_1<0$, which is absurd.
\end{proof}
Since $Cen(f_1)=\{\phi_0^!\beta:\beta\in S\}$, it follows that 
\ban
\suml_{\beta_1\in Cen(f_1)}v^{\beta_1} Z'_{DT}(X_1;q|)_{\beta_1}=\suml_{\beta\in Cen(f)}v^{\phi_0^!\beta}Z'_{DT}(X_1;q|)_{\phi_0^!\beta}.
\nan
Now we have
\ban
&&\suml_{\beta_1\in Cen(f_1)}v^{\cF_1\beta_1}Z'_{DT}(X'_1;q|)_{\cF_1\beta_1}\\
&=&\suml_{\beta'_1\in Cen(f_1^{-1})}v^{\beta'_1}Z'_{DT}(X'_1;q| )_{\beta'_1}\\
&=&\suml_{\beta'\in Cen(f^{-1})}v^{(\phi'_0)^!\beta'}Z'_{DT}(X'_1;q|)_{(\phi'_0)^!\beta'}\\
&=&\suml_{\beta\in Cen(f)}v^{(\phi'_0)^!\cF\beta}Z'_{DT}(X'_1;q|)_{(\phi'_0)^!\cF\beta}\\
&=&\suml_{\beta\in Cen(f)}v^{\cF_1\phi_0^!\beta}Z'_{DT}(X'_1;q|)_{(\phi'_0)^!\cF\beta},
\nan
which implies that 
\ban
\frac{\suml_{\beta_1\in H_2(X_1,\bZ)}v^{\beta_1} Z'_{DT}(X_1;q|\prodl_{i=1}^m\tilde\tau_{d_i}(\phi_0^*\gamma_i))_{\beta_1}}{\suml_{\beta\in Cen(f)}v^{\phi_0^!\beta}Z'_{DT}(X_1;q|)_{\phi_0^!\beta}}&=&\frac{\suml_{\beta_1\in H_2(X_1,\bZ)}v^{\beta_1} Z'_{DT}(X'_1;q|\prodl_{i=1}^m\tilde\tau_{d_i}(\cF_1\phi_0^*\gamma_i))_{\cF_1\beta_1}}{\suml_{\beta\in Cen(f)}v^{\phi_0^!\beta}Z'_{DT}(X'_1;q|)_{(\phi'_0)^!\cF\beta}}.
\nan
Observe that we have the following decomposition
\ban
H_2(X_1,\bZ)=\phi_0^!H_2(X,\bZ)\oplus\bZ f_{1,1}\oplus\cdots\oplus\bZ f_{1,l},
\nan
where $f_{1,i}$ is the class of a fiber in $E_{1,i}$. So we obtain
\ba
&&\frac{\suml_{\beta\in H_2(X,\bZ)}v^{\beta} Z'_{DT}(X_1;q|\prodl_{i=1}^m\tilde\tau_{d_i}(\phi_0^*\gamma_i))_{\phi_0^!\beta}}{\suml_{\beta\in Cen(f)}v^{\beta}Z'_{DT}(X_1;q|)_{\phi_0^!\beta}}\nonumber\\
&=&\frac{\suml_{\beta\in H_2(X,\bZ)}v^{\beta} Z'_{DT}(X'_1;q|\prodl_{i=1}^m\tilde\tau_{d_i}(\cF_1\phi_0^*\gamma_i))_{\cF_1\phi_0^!\beta}}{\suml_{\beta\in Cen(f)}v^{\beta}Z'_{DT}(X'_1;q|)_{(\phi'_0)^!\cF\beta}}.\label{X_1X'_1}
\na
Note that 
\ban
\cF_1\phi_0^*\gamma_i=(\phi'_0)^!\cF\gamma_i\textrm{ and }\cF_1\phi_0^!\beta=(\phi'_0)^!\cF\beta,
\nan
and we see that in the case $w=W+1$, \eqref{flop} follows from \eqref{X}, \eqref{X'} and \eqref{X_1X'_1}.

To prove \eqref{center}, we have the following observation. Using the identification $-\cF:H_2(X,\bZ)\xrightarrow{\cong}H_2(X',\bZ)$, from \eqref{Xcenter}, we have
\ba
\suml_{\beta\in Cen(f)}v^{-\beta}Z'_{DT}(X';q|)_{\cF\beta}&=&\suml_{\beta\in Cen(f)}v^{-\beta}Z'_{DT}\bigg(X'_1/E'_1;q||\bigg)_{(\phi'_0)^!\cF\beta}\label{X'center}\\
&&\cdot\prodl_{i=1}^l\suml_{d\geqslant 0}v^{d[C_i]}Z'_{DT}(\bP'_i/D'_i;q||)_{d[C'_i]},\nonumber
\na
where
\ban
E'_1&:=&\bigcup\limits_{i=1}^lE'_{1,i},\\
\bP'_i&:=&\bP_{C'_i}(N_{C'_i}\oplus\cO_{C'_i}),\quad (N_{C'_i}\textrm{ is the normal bundle of }C'_i\textrm{ in }X')\\
D'_i&:=&\bP_{C'_i}(N_{C'_i}\oplus\{0\}),\\
C'_i&\cong&\bP_{C'_i}(\{0\}\oplus\cO_{C'_i}).
\nan
So from \eqref{Xcenter} and \eqref{X'center}, \eqref{center} is equivalent to the following:
\ba
\suml_{\beta\in Cen(f)}v^{\beta}Z'_{DT}\bigg(X_1/E_1;q||\bigg)_{\phi_0^!\beta}=\suml_{\beta\in Cen(f)}v^{-\beta}Z'_{DT}\bigg(X'_1/E'_1;q||\bigg)_{(\phi'_0)^!\cF\beta}\label{equivalenttocenter}
\na

Now we use induction on $w=1,2,3,\cdots$ to prove \eqref{center} (or \eqref{equivalenttocenter}). For $w=1$, Lemma \ref{1stlemma} implies that both LHS and RHS of \eqref{equivalenttocenter} are equal to $1$. Assume that the case for $w=W\geqslant1$ is proved. Then for $w=W+1$, we have
\ban
\suml_{\beta_1\in Cen(f_1)}v^{\beta_1}Z'_{DT}(X_1;q|)_{\beta_1}=\suml_{\beta_1\in Cen(f_1)}v^{-\beta_1}Z'_{DT}(X'_1;q|)_{\cF_1\beta_1},
\nan
and by Lemma \ref{2ndlemma}, this gives
\ba
\suml_{\beta\in Cen(f)}v^{\beta}Z'_{DT}(X_1;q|)_{\phi_0^!\beta}=\suml_{\beta\in Cen(f)}v^{-\beta}Z'_{DT}(X'_1;q|)_{\cF_1\phi_0^!\beta},\label{X_1X'_1center}
\na
Note that using the identification $-\cF:H_2(X,\bZ)\xrightarrow{\cong}H_2(X',\bZ)$, \eqref{X_1center} gives
\ba
\suml_{\beta\in Cen(f)}v^{-\beta}Z'_{DT}(X'_1;q|)_{(\phi'_0)^!\cF\beta}&=&\suml_{\beta\in Cen(f)}v^{-\beta}Z'_{DT}\bigg(X'_1/E'_1;q||\bigg)_{(\phi'_0)^!\cF\beta}\nonumber\\
&&\qquad\cdot\prodl_{i=1}^{l}\suml_{d\geqslant0}v^{d[C_i]}\suml_{\substack{\beta\in H_2(\bP'_{1,i},\bZ)\\\beta\cdot E'_{1,i}=\beta\cdot D'_{1,i}=0\\(\phi'_0)_*(\pi'_{1,i})_*\beta=d[C'_i]}}Z'_{DT}(\bP'_{1,i}/D'_{1,i};q||)_{\beta},\label{X'_1center}
\na
where
\ban
\bP'_{1,i}&:=&\bP_{E'_{1,i}}(N_{E'_{1,i}}\oplus\cO_{E'_{1,i}}),\quad (N_{E'_{1,i}}\textrm{ is the normal bundle of }E'_{1,i}\textrm{ in }X'_1)\\
D'_{1,i}&:=&\bP_{E'_{1,i}}(N_{E'_{1,i}}\oplus\{0\}),\\
E'_{1,i}&\cong&\bP_{E'_{1,i}}(\{0\}\oplus\cO_{E'_{1,i}}), 
\nan
and $\pi'_{1,i}$ is the composition $\bP'_{1,i}\rightarrow E'_{1,i}\hookrightarrow X'_1$. Note that 
\ban
\cF_1\phi_0^!\beta=(\phi'_0)^!\cF\beta,
\nan
and we see that in the case $w=W+1$, \eqref{equivalenttocenter} follows from \eqref{X_1center}, \eqref{X_1X'_1center} and \eqref{X'_1center}.

\section{BPS state counts}

BPS state counts were first introduced in the Gromov-Witten theory. In a study of Type IIA string theory via M-theory, Gopakumar and Vafa defined BPS state counts on Calabi-Yau $3$-folds \cite{GV1,GV2}. Motivated by the Calabi-Yau case together with the degenerate contribution computation, Pandharipande defined BPS state counts for arbitrary $3$-folds \cite{P1}. We refer interested readers to \cite{P2} for a precise description of the working definition of BPS state counts of Gromov-Witten theory of $X$. 

Now we give a  working definition of BPS state counts of Donaldson-Thomas theory. Let $\{T_i\}_{0\leqslant i\leqslant N}$ be a basis of $H^{*}(X,\bQ)$, and we define the BPS state counts of Donaldson-Thomas theory by the following identity:
\ban
&&\suml_{\substack{\beta\in H_2(X,\bZ)\\\int_\beta c_1(X)=0}}v^\beta Z'_{DT}(X;q|)_\beta+\suml_{\substack{\beta\in H_2(X,\bZ)\\\int_\beta c_1(X)>0}}v^\beta\suml_{e_0,\cdots,e_N\in\bZ_{\geqslant 0}}Z'_{DT}(X;q|\prodl_{i=0}^N\tilde\tau_0(T_i)^{e_i})_\beta\prodl_{i=0}^N\frac{t_i^{e_i}}{e_1!}\\
&=&\exp\bigg\{\suml_{\substack{\beta\in H_2(X,\bZ)\setminus\{0\}\\\int_\beta c_1(X)=0}}v^\beta\suml_{g\in\bZ}\suml_{r\in div(\beta)}n_{g,\frac\beta r}^X\cdot\frac{(-1)^{g-1}}{r}\Big[(-q)^r-2+(-q)^{-r}\Big]^{g-1}\\
&&\qquad+\suml_{\substack{\beta\in H_2(X,\bZ)\\\int_\beta c_1(X)>0}}v^\beta\suml_{g\in\bZ}\suml_{e_0,\cdots,e_N\in\bZ_{\geqslant 0}}n_{g,\beta}^X(\prodl_{i=0}^NT_i^{e_i})\prodl_{i=0}^N\frac{t_i^{e_i}}{e_1!}\cdot(-1)^{g-1}\Big[(-q)-2+(-q)^{-1}\Big]^{g-1}(1+q)^{\int_\beta c_1(X)}\bigg\}.
\nan 
Since by Lemma 3.1 in \cite{EQ}, the full primary Donaldson-Thomas theory is determined by those invariants with primary insertions (if any) of degree$>2$, it follows that the BPS state counts vanish if insertions of degree$<2$ appear, and they satisfy the divisor equation. 

Note that the Donaldson-Thomas theory counts curves only in a virtual sense. However, it is expected that BPS state counts are enumerative. More precisely, assume that $\gamma_1,\cdots,\gamma_m$ are integral, and let $X_i\subset X$ be a subvariety which is the Poinca\'e dual of $\gamma_i$ in general position. Then $n_{g,\beta}^X(\gamma_1,\cdots,\gamma_m)$ is expected to the number of irreducible embedded curves in $X$ of geometric genus $g$, with homology class $\beta$ and intersecting with all $X_i$'s.

To prove Corollary \ref{BPS}, we only need to consider insertions of degree$>2$. Without loss of generality, let $\{T_i\}_{0\leqslant i\leqslant L}$ be a basis of $H^{>2}(X,\bQ)$. Since $[C_1],\cdots,[C_l]$ generate an extremal face in $NE(X)$, it follows that
\ba
&&\suml_{\beta\in Cen(f)}v^\beta Z'_{DT}(X;q|)_\beta\nonumber\\
&=&\exp\bigg\{\suml_{\beta\in Cen(f)}v^\beta\suml_{g\in\bZ}\suml_{r\in div(\beta)}n_{g,\frac\beta r}^X\cdot\frac{(-1)^{g-1}}{r}\Big[(-q)^r-2+(-q)^{-r}\Big]^{g-1}\bigg\},\label{BPSXcenter}
\na
and then
\ba
&&\frac{\suml_{\substack{\beta\in H_2(X,\bZ)\\\int_\beta c_1(X)=0}}v^\beta Z'_{DT}(X;q|)_\beta+\suml_{\substack{\beta\in H_2(X,\bZ)\\\int_\beta c_1(X)>0}}v^\beta\suml_{e_0,\cdots,e_L\in\bZ_{\geqslant 0}}Z'_{DT}(X;q|\prodl_{i=0}^L\tilde\tau_0(T_i)^{e_i})_\beta\prodl_{i=0}^L\frac{t_i^{e_i}}{e_1!}}{\suml_{\beta\in Cen(f)}v^{\beta}Z'_{DT}(X;q|)_{\beta}}\nonumber\\
&=&\exp\bigg\{\suml_{\substack{\beta\in H_2(X,\bZ)\setminus Cen(f)\\\int_\beta c_1(X)=0}}v^\beta\suml_{g\in\bZ}\suml_{r\in div(\beta)}n_{g,\frac\beta r}^X\cdot\frac{(-1)^{g-1}}{r}\Big[(-q)^r-2+(-q)^{-r}\Big]^{g-1}\nonumber\\
&&\qquad+\suml_{\substack{\beta\in H_2(X,\bZ)\\\int_\beta c_1(X)>0}}v^\beta\suml_{g\in\bZ}\suml_{e_0,\cdots,e_L\in\bZ_{\geqslant 0}}n_{g,\beta}^X(\prodl_{i=0}^LT_i^{e_i})\prodl_{i=0}^L\frac{t_i^{e_i}}{e_1!}\nonumber\\
&&\qquad\qquad\cdot(-1)^{g-1}\Big[(-q)-2+(-q)^{-1}\Big]^{g-1}(1+q)^{\int_\beta c_1(X)}\bigg\}.\label{BPSX}
\na
Using the identification $\cF:H_2(X,\bZ)\xrightarrow{\cong}H_2(X',\bZ)$, we also have
\ba
&&\frac{\suml_{\substack{\beta\in H_2(X,\bZ)\\\int_{\cF\beta} c_1(X')=0}}v^\beta Z'_{DT}(X';q|)_{\cF\beta}+\suml_{\substack{\beta\in H_2(X,\bZ)\\\int_\beta c_1(X)>0}}v^\beta\suml_{e_0,\cdots,e_L\in\bZ_{\geqslant 0}}Z'_{DT}(X';q|\prodl_{i=0}^L\tilde\tau_0(\cF T_i)^{e_i})_{\cF\beta}\prodl_{i=0}^L\frac{t_i^{e_i}}{e_1!}}{\suml_{\beta\in Cen(f)}v^{\beta}Z'_{DT}(X';q|)_{\cF\beta}}\nonumber\\
&=&\exp\bigg\{\suml_{\substack{\beta\in H_2(X,\bZ)\setminus Cen(f)\\\int_\beta c_1(X)=0}}v^\beta\suml_{g\in\bZ}\suml_{r\in div(\beta)}n_{g,\frac{\cF\beta}{r}}^{X'}\cdot\frac{(-1)^{g-1}}{r}\Big[(-q)^r-2+(-q)^{-r}\Big]^{g-1}\nonumber\\
&&\qquad+\suml_{\substack{\beta\in H_2(X,\bZ)\\\int_{\cF\beta} c_1(X')>0}}v^\beta\suml_{g\in\bZ}\suml_{e_0,\cdots,e_L\in\bZ_{\geqslant 0}}n_{g,\cF\beta}^{X'}(\prodl_{i=0}^L(\cF T_i)^{e_i})\prodl_{i=0}^L\frac{t_i^{e_i}}{e_1!}\nonumber\\
&&\qquad\qquad\cdot(-1)^{g-1}\Big[(-q)-2+(-q)^{-1}\Big]^{g-1}(1+q)^{\int_{\cF\beta} c_1(X')}\bigg\}.\label{BPSX'}
\na
Note that 
\ban
\int_\beta c_1(X)=\int_{\phi_{w-1}^!\cdots\phi_0^!\beta}c_1(X_w)=\int_{(\phi'_{w-1})^!\cdots(\phi'_0)^!\cF\beta}c_1(X_w)=\int_{\cF\beta} c_1(X').
\nan
So \eqref{BPSflop} follows from \eqref{flop}, \eqref{BPSX} and \eqref{BPSX'}. 

For \eqref{BPScenter}, using the identification $-\cF:H_2(X,\bZ)\xrightarrow{\cong}H_2(X',\bZ)$, we get from \eqref{BPSXcenter}
\ba
&&\suml_{\beta\in Cen(f)}v^{-\beta} Z'_{DT}(X';q|)_{\cF\beta}\nonumber\\
&=&\exp\bigg\{\suml_{\beta\in Cen(f)}v^{-\beta}\suml_{g\in\bZ}\suml_{r\in div(\cF\beta)}n_{g,\frac{\cF\beta}{r}}^X\cdot\frac{(-1)^{g-1}}{r}\Big[(-q)^r-2+(-q)^{-r}\Big]^{g-1}\bigg\},\label{BPSX'center}
\na
So \eqref{BPScenter} follows from \eqref{center}, \eqref{BPSXcenter} and \eqref{BPSX'center}.

\section{GW/DT correspondence}

In this section, we give a proof of Corollary \ref{GWDT}. We first review basic materials in Gromov-Witten theory, and describe the change of Gromov-Witten theory under flops. Then we follow \cite{MNOP2} to recall the conjectural formulae for the GW/DT correspondence, and use these formulae to prove Corollary \ref{GWDT}. 

Let $\overline M_{g,m}(X,\beta)$ be the moduli space of $m$-pointed stable maps from connected, genus $g$ curves to $X$, representing the class $\beta\in H_2(X,\bZ)$. Let $ev_i:\overline M_{g,m}(X,\beta)\rightarrow X$ be the evaluation map at the $i$-th marked point, and set  
\ban
\psi_i:=c_1(L_i)\in H^2(\overline M_{g,m}(X,\beta),\bQ),
\nan
where $L_i$ is the cotangent line bundle associated to the $i$-th marked point. For $\gamma_1,\cdots,\gamma_m\in H^*(X,\bQ)$ and $d_1,\cdots,d_m\in\bZ_{\geqslant0}(m\geqslant0)$, define the (connected) correlator by
\ban
\<\prodl_{i=1}^m\tau_{d_i}(\gamma_i)\>_{g,\beta}^X:=\int_{[\overline M_{g,m}(X,\beta)]^{vir}}\prodl_{i=1}^m\psi_i^{d_i}ev_i^*(\gamma_i).
\nan

The conjectural GW/DT correspondence compares partition functions of disconnected Gromov-Witten invariants with reduced Donaldson-Thomas partition function. Let $\{T_i\}_{0\leqslant i\leqslant N}$ be a basis of $H^{*}(X,\bQ)$, and the disconnected partition functions in Gromov-Witten theory are given by the following identity:
\ban
&&1+\suml_{\beta\in H_2(X,\bZ)\setminus\{0\}}v^\beta\suml_{\substack{e_{d,i}\in\bZ_{\geqslant0}}}Z'_{GW}(X;u|\prodl_{\substack{d\geqslant0\\0\leqslant i\leqslant N}}\tau_d(T_i)^{e_{d,i}})_\beta\prodl_{\substack{d\geqslant0\\0\leqslant i\leqslant N}}\frac{t_{d,i}^{e_{d,i}}}{e_{d,i}!}\\
&=&\exp\bigg\{\suml_{\beta\in H_2(X,\bZ)\setminus\{0\}}v^\beta\suml_{g\in\bZ_{\geqslant 0}}u^{2g-2}\suml_{\substack{e_{d,i}\in\bZ_{\geqslant 0}}}\<\prodl_{\substack{d\geqslant0\\0\leqslant i\leqslant N}}\tau_d(T_i)^{e_{d,i}}\>_{g,\beta}^X\prodl_{\substack{d\geqslant0\\0\leqslant i\leqslant N}}\frac{t_{d,i}^{e_{d,i}}}{e_{d,i}!}\bigg\}.
\nan

For the change of Gromov-Witten theory under flops, we have the following theorem.
\begin{theorem}(Theorem A in \cite{LR})
Let $f$ be a general flop. Let $\gamma_1,\cdots,\gamma_m\in H^*(X,\bQ)$ and $d_1,\cdots,d_m\in\bZ_{\geqslant 0}(m\geqslant 0)$, such that $\gamma_i$ has support away from the center of $f$. Then 
\ba
&&\frac{1+\suml_{\beta\in H_2(X,\bZ)\setminus\{0\}}v^\beta Z'_{GW}(X;u|\prodl_{i=1}^m\tilde\tau_{d_i}(\gamma_i))_\beta}{1+\suml_{\beta\in Cen(f)\setminus\{0\}}v^{\beta} Z'_{GW}(X;u|)_{\beta}}\nonumber\\
&=&\frac{1+\suml_{\beta\in H_2(X,\bZ)\setminus\{0\}}v^\beta Z'_{GW}(X';u|\prodl_{i=1}^m\tilde\tau_{d_i}(\cF\gamma_i))_{\cF\beta}}{1+\suml_{\beta\in Cen(f)\setminus\{0\}}v^{\beta} Z'_{GW}(X';u|)_{\cF\beta}},\label{GWflop}\\
Z'_{GW}(X;u|)_\beta&=&Z'_{GW}(X';u|)_{-\cF\beta},\qquad\forall\beta\in Cen(f)\setminus\{0\}.\label{GWcenter}
\na
\end{theorem}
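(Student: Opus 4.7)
The plan is to emulate the degeneration strategy behind Proposition \ref{mainresult}, transposed into Gromov-Witten theory. First, I would perform a simultaneous semi-stable degeneration of $X$ along a tubular neighborhood of the center of $f$, producing a family $\pi:\chi\to\Delta$ whose generic fiber is $X$ and whose central fiber is a normal-crossings union $X_1\cup_S P$, where $X_1$ is (a compactification of) the complement of the center and $P$ is the projective completion of the normal bundle to the center. Jun Li's Gromov-Witten degeneration formula then decomposes $Z'_{GW}(X;u|\prodl_{i=1}^m\tilde\tau_{d_i}(\gamma_i))_\beta$ as a sum over cohomology-weighted partitions $\eta$ and degree splittings $\beta=\beta_1+\beta_2$ of products of relative invariants of $(X_1,S)$ and $(P,S)$. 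Because each $\gamma_i$ is supported away from the center, its global lift pushes entirely onto the $(X_1,S)$ factor at $t=0$.

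Second, I would carry out the symmetric degeneration of $X'$ along the center of $f^{-1}$, obtaining a central fiber $X'_1\cup_S P'$. The biregularity of $f$ outside the codimension-two center identifies $X_1$ with $X'_1$ as relative pairs over $S$, and under this identification the global lifts of $\gamma_i$ match $\cF\gamma_i$; effective classes $\beta_1$ on $(X_1,S)$ correspond to effective classes on $(X'_1,S)$ via $\cF$. Consequently the insertion-bearing factor $Z'_{GW}(X_1/S;u|\prodl_{i=1}^m\tilde\tau_{d_i}(\gamma_i)|\eta)_{\beta_1}$ agrees with its $X'$-counterpart after applying $\cF$.

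Third, the remaining factors are the purely local relative invariants of $(P,S)$ versus $(P',S)$. For classes $\beta\in Cen(f)\setminus\{0\}$, which are entirely carried by the center, these local invariants give all contributions to $Z'_{GW}(X;u|)_\beta$, and a direct local comparison of the two projective bundles (the flop exchanges the two rulings over each rational curve in the center) yields $Z'_{GW}(X;u|)_\beta=Z'_{GW}(X';u|)_{-\cF\beta}$, i.e.\ \eqref{GWcenter}. Substituting this back into the degeneration sums, the common ``center generating series'' factor appearing on each side is precisely the denominator in \eqref{GWflop}; dividing through cancels it and produces \eqref{GWflop}.

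The main obstacle is the general-flop hypothesis: unlike the $(-2)$-curve case, the center may contain $(1,-3)$-curves and is not amenable to Reid's iterated blow-up decomposition. Consequently the combinatorial sum over partitions $\eta$ in the GW degeneration formula does not obviously collapse to $\eta=\emptyset$ the way it did for Donaldson-Thomas theory via the dimension constraint. Li and Ruan circumvent this in the symplectic category via symplectic cutting and an explicit analysis of the stable maps landing in the local model, so the technical heart of the argument is to show that the local and non-local contributions factor cleanly enough in the ratio on the left of \eqref{GWflop} for the center piece to cancel.
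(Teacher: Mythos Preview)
The paper does not prove this theorem at all: it is quoted verbatim as Theorem~A of Li--Ruan \cite{LR}, with only the remark that extending from $d_1=\cdots=d_m=0$ to arbitrary descendants is straightforward. There is therefore no ``paper's own proof'' to compare against; the statement functions purely as an input to the proof of Corollary~\ref{GWDT}.

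Your sketch is in the right spirit, but it contains a genuine gap at the step where you assert that ``the biregularity of $f$ outside the codimension-two center identifies $X_1$ with $X'_1$ as relative pairs over $S$.'' If by the degeneration you mean deformation to the normal cone, then $X_1$ is the blow-up of $X$ along the center and $X'_1$ is the blow-up of $X'$ along the flopped center; the isomorphism $X\setminus C\cong X'\setminus C'$ does \emph{not} automatically extend across the exceptional divisors to give $X_1\cong X'_1$. For $(-1,-1)$-curves this happens to hold (both blow-ups coincide with the common small resolution), and for $(-2)$-curves one reaches a common roof only after Reid's tower of $w$ blow-ups, which is exactly what drives the paper's proof of Proposition~\ref{mainresult}. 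For a general flop---in particular one involving $(1,-3)$-curves---no such common algebraic roof is available (cf.\ \cite{Pi} and the discussion after Conjecture~\ref{conj}), so your identification of the insertion-bearing relative factors is unjustified. Li--Ruan's actual argument sidesteps this by working in the symplectic category: symplectic cutting produces genuinely isomorphic pieces on the two sides because the symplectic structures near the centers are standard, a flexibility the algebraic degeneration does not enjoy. You correctly flag this obstacle in your final paragraph, but it is not a technicality to be cleaned up---it is the entire content of the theorem.
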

\begin{remark}
Theorem A in \cite{LR} only deals with the case $d_1=\cdots=d_m=0$, but the generalization is straightforward.
\end{remark}

Now we give precise formulae for the conjectural GW/DT correspondence. For primary insertions, we have the following conjecture.
\begin{conjecture}(Conjecture $2$ in \cite{MNOP2})
Suppose that $\gamma_1,\cdots,\gamma_m\in H^*(X,\bQ)(m\geqslant0)$. Then after the change of variables $q=-e^{\sqrt{-1}u}$, we have
\ban
(-\sqrt{-1}u)^{\int_\beta c_1(X)}Z'_{GW}(X;u|\prodl_{i=1}^m\tau_0(\gamma_i))_\beta&=&(-q)^{-\frac12\int_\beta c_1(X)}Z'_{DT}(X;q|\prodl_{i=1}^m\tilde\tau_0(\gamma_i))_\beta.
\nan
\end{conjecture}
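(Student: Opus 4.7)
The conjecture asserts a nontrivial equivalence of two virtual curve counts via the substitution $q = -e^{\sqrt{-1}u}$ together with the stated prefactors that correct for the difference in virtual dimensions on the two sides. My plan is to follow the degeneration-to-toric strategy that established the correspondence for toric $3$-folds in \cite{MOOP} and for complete intersections in products of projective spaces in \cite{PP, T1}.

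First, I would establish the correspondence equivariantly for smooth projective toric $3$-folds via $T$-localization. On both sides, the fixed loci admit combinatorial descriptions: the GW side is organized by the topological vertex attached to the trivalent graph of the toric $3$-fold, while the DT side is organized by the MNOP vertex enumerating three-dimensional partitions with asymptotic profiles along the three coordinate axes. The core analytic input is a vertex identity: after the substitution $q = -e^{\sqrt{-1}u}$ and the normalization corrections, the two vertex generating functions must agree as formal series. This ultimately reduces to combinatorial identities among plane-partition generating functions matched with Hodge integrals.

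Second, I would upgrade this to a relative version along a smooth divisor $S\subset X$, with insertions from $\textrm{Hilb}(S,n)$ taken in the Nakajima basis. Coupling the relative correspondence with the GW and DT degeneration formulas, one can reduce a general projective $X$ to a chain of toric pieces via a semistable degeneration and assemble absolute invariants on both sides in parallel. The global liftings $\gamma_i(t)$ of each cohomology class across the family recalled in Section 2 guarantee that the insertions are compatible under the degeneration, so the GW and DT degeneration sums can be matched term by term.

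The main obstacle is the relative correspondence itself. The Nakajima classes transform nontrivially under the change of variables, and rubber contributions along $S$ require delicate relative localization to analyze. Matching the prefactors $(-\sqrt{-1}u)^{\int_\beta c_1(X)}$ and $(-q)^{-\frac12\int_\beta c_1(X)}$ uniformly across all pieces of any degeneration, together with the combinatorial automorphism factors $\fz(\eta)$ appearing on each side of the gluing formula, is where I expect the argument to become most technical. A further subtlety is that descendent insertions on the relative geometry do not enjoy as transparent a combinatorial calculus as the primary ones, so one may have to restrict first to the primary case and then exploit the relationship between descendents and Nakajima operators to lift the result.
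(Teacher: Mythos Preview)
The statement you are trying to prove is a \emph{conjecture} in the paper, not a theorem. The paper merely records it verbatim as Conjecture~2 of \cite{MNOP2} and offers no proof; it is invoked only as the hypothesis in Corollary~\ref{GWDT}, where one \emph{assumes} the correspondence holds for $X$ and deduces it for the flop $X'$. So there is nothing in the paper to compare your proposal against.

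More substantively, what you have sketched is not a proof but a summary of the degeneration-to-toric program that has succeeded only in special cases (toric $3$-folds \cite{MOOP}, certain complete intersections \cite{PP,T1}). The gap is your second step: you assert that ``one can reduce a general projective $X$ to a chain of toric pieces via a semistable degeneration,'' but no such degeneration exists for an arbitrary nonsingular projective $3$-fold. Without that, the vertex identities and relative correspondence, even granting them in full, do not assemble into the absolute statement for general $X$. Your proposal therefore does not close the conjecture; it recapitulates the known strategy and correctly identifies where it stalls, but it should not be presented as a proof.
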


The authors of \cite{MNOP2} conjectured that the descendent Gromov-Witten theory of $X$ is equivalent to the descendent Donaldson-Thomas theory of $X$ in a subtle way. In the general case, they did not find a complete formula for the conjectural correspondence. However, we have the following precise conjecture for the descendants of the point class. 

\begin{conjecture}(Conjecture $4'$ in \cite{MNOP2})
Let $P$ be the class of a point in $X$. Suppose that $\gamma_1,\cdots,\gamma_m\in H^{>0}(X,\bQ)(m\geqslant0)$, and $d_1,\cdots,d_n\in\bZ_{\geqslant 0}(n\geqslant0)$. Then after the change of variables $q=-e^{\sqrt{-1}u}$, we have
\ban
&&(-\sqrt{-1}u)^{\int_\beta c_1(X)-\suml_{i=1}^nd_i}Z'_{GW}(X;u|\prodl_{i=1}^m\tau_0(\gamma_i)\prodl_{i=1}^n\tau_{d_i}(P))_\beta\\
&=&(-q)^{-\frac12\int_\beta c_1(X)}Z'_{DT}(X;q|\prodl_{i=1}^m\tilde\tau_0(\gamma_i)\prodl_{i=1}^n\tilde\tau_{d_i}(P))_\beta.
\nan
\end{conjecture}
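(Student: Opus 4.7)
The plan is to prove the conjecture by first establishing it on toric threefolds via equivariant localization, and then propagating the identity to general nonsingular projective threefolds through the degeneration formula. This parallels the successful strategy for primary insertions (Conjecture 2) carried out in \cite{MOOP}, but now refined to accommodate descendant insertions of the point class $P$.

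For the toric case, I would fix the standard $(\bC^*)^3$-action on $X$ and exploit the virtual localization theorems on both sides. The descendant DT side reduces, via the topological vertex formalism, to a weighted sum over three-dimensional partitions assigned to vertices of the toric web, with edge contributions given by two-dimensional partitions. An insertion of $\tilde\tau_{d_i}(P)$ concentrates at torus-fixed points and contributes explicit rational functions in the equivariant parameters through $\textrm{ch}_{2+d_i}$ of the universal ideal sheaf. On the GW side one gets a graph sum of Hodge integrals, with $\tau_{d_i}(P)$ contributing through $\psi$-class integrals at contracted components. The task is to extend the MNOP/MOOP vertex matching to a \emph{descendant vertex with point descendants}, verifying the conjectural equality term by term. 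The change of variables $q=-e^{\sqrt{-1}u}$ together with the explicit prefactor $(-\sqrt{-1}u)^{\int_\beta c_1(X)-\sum d_i}$ versus $(-q)^{-\frac12\int_\beta c_1(X)}$ must emerge naturally from the comparison of the equivariant vertex/edge weights; this requires delicate bookkeeping of tangent and normal weights but is in principle combinatorial.

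To handle general $X$, I would use the degeneration formulas of Li--Wu for DT and Li--Ruan/J.~Li for GW. Both satisfy a common structural identity: for a semistable degeneration $\chi\to\Delta$ with central fiber $X_1\cup_S X_2$, an insertion $\tilde\tau_{d_i}(P)$ whose support lies in the smooth locus of $\chi_0$ away from $S$ lifts cleanly to a relative insertion on exactly one component. Combined with the absolute/relative correspondence of Maulik--Pandharipande \cite{MP} and its DT counterpart \cite{HLR}, this should promote the conjecture to a relative form. One then invokes a deformation-plus-degeneration argument to reduce any smooth projective threefold to configurations of toric pieces glued along smooth toric surfaces, with all descendant point insertions carried by the toric pieces. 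The $c_1$-sensitive prefactors must be tracked through each degeneration, which is manageable because under a smooth degeneration $c_1$ is preserved, and under a semistable degeneration it splits according to $c_1(X_i)=c_1(\chi)|_{X_i}-[S]|_{X_i}$.

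The main obstacle is the reduction step: unlike in GW theory, where Li--Ruan's symplectic machinery degenerates freely, there is no general theorem that every smooth projective threefold degenerates to a union of toric pieces within the algebraic category where the DT degeneration formula applies. One partial route is to first establish the conjecture for families admitting enough fibration structure (conic bundles, blow-ups of toric threefolds, elliptic fibrations) so that the fibers are explicitly computable, and then extend via deformation invariance. A secondary obstacle is the compatibility of the point-descendant $\tilde\tau_d(P)$ with gluing: because $\textrm{ch}_{2+d}(\cI)$ is built from a global universal sheaf rather than a sheaf on a single component, one must check that the relative insertion on the two sides of the degeneration really decomposes in the expected way. I expect the combinatorial matching on the toric side to be technically intricate but tractable, while the degeneration-to-toric reduction is the genuinely hard step and is the reason the conjecture remains open in full generality.
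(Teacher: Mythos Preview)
The statement you are attempting to prove is explicitly labeled a \emph{Conjecture} in the paper (it is Conjecture~$4'$ of \cite{MNOP2}, quoted verbatim), and the paper makes no attempt to prove it. It is invoked only as a hypothesis: Corollary~\ref{GWDT} assumes that $X$ satisfies this conjectural correspondence and deduces the same for the flop $X'$. There is therefore no proof in the paper to compare your proposal against.

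As for the proposal itself, it is not a proof but a strategy outline, and you correctly identify its own fatal gap at the end: the reduction of an arbitrary smooth projective threefold to toric pieces via semistable degenerations is not available in general, so the argument cannot close. The toric-plus-degeneration paradigm you describe is indeed what underlies \cite{MOOP} for the primary case and has been extended to various descendent settings by Pandharipande--Pixton, but Conjecture~$4'$ in full generality remains open precisely because of the obstacles you name. Your write-up is a fair summary of the state of the art, not a proof.
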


To prove Corollary \ref{GWDT}, note that by Lemma 3.1 in \cite{EQ}, we only need to consider insertions whose pullback classes have degree$>2$. Without loss of generality, let $\{T_i\}_{0\leqslant i\leqslant L}$ be a basis of $H^{*}(X,\bQ)$, where $T_0$ is the class of a point. Then the assumption in Corollary \ref{GWDT} gives
\ban
1+\suml_{\beta\in H_2(X,\bZ)\setminus\{0\}}v^\beta(-\sqrt{-1}u)^{\int_\beta c_1(X)}\suml_{\substack{e_{0,1},\cdots,e_{0,L}\in\bZ_{\geqslant 0}\\e_{d,0}\in\bZ_{\geqslant0}}}Z'_{GW}(X;u|\prodl_{i=1}^L\tau_0(T_i)^{e_{0,i}}\prodl_{d=0}^\infty\tau_d(T_0)^{e_{d,0}})_\beta\prodl_{i=1}^L\frac{t_{0,i}^{e_{0,i}}}{e_{0,i}!}\prodl_{d=0}^\infty\frac{((-\sqrt{-1}u)^{-1}t_{d,0})^{e_{d,0}}}{e_{d,0}!}\nonumber\\
=\suml_{\beta\in H_2(X,\bZ)}v^\beta(-q)^{-\frac12\int_\beta c_1(X)}\suml_{\substack{e_{0,1},\cdots,e_{0,L}\in\bZ_{\geqslant 0}\\e_{d,0}\in\bZ_{\geqslant 0}}}Z'_{DT}(X;q|\prodl_{i=1}^L\tilde\tau_0(T_i)^{e_{0,i}}\prodl_{d=0}^\infty\tilde\tau_d(T_0)^{e_{d,0}})_\beta\prodl_{i=1}^L\frac{t_{0,i}^{e_{0,i}}}{e_{0,i}!}\prodl_{d=0}^\infty\frac{t_{d,0}^{e_{d,0}}}{e_{d,0}!}.\label{GWDTX}
\nan
Note that the map $v^\beta\mapsto v^\beta(-\sqrt{-1}u)^{\int_\beta c_1(X)}$ gives an isomorphism in the Novikov ring of $X$, and then \eqref{GWflop} implies that
\ban
\frac{1+\suml_{\beta\in H_2(X,\bZ)\setminus\{0\}}v^\beta(-\sqrt{-1}u)^{\int_\beta c_1(X)}\suml_{\substack{e_{0,1},\cdots,e_{0,L}\in\bZ_{\geqslant 0}\\e_{d,0}\in\bZ_{\geqslant0}}}Z'_{GW}(X;u|\prodl_{i=1}^L\tau_0(T_i)^{e_{0,i}}\prodl_{d=0}^\infty\tau_d(T_0)^{e_{d,0}})_\beta\prodl_{i=1}^L\frac{t_{0,i}^{e_{0,i}}}{e_{0,i}!}\prodl_{d=0}^\infty\frac{((-\sqrt{-1}u)^{-1}t_{d,0})^{e_{d,0}}}{e_{d,0}!}}{1+\suml_{\beta\in Cen(f)\setminus\{0\}}v^{\beta}Z'_{GW}(X;u|)_{\beta}}\\
=\frac{1+\suml_{\beta\in H_2(X,\bZ)\setminus\{0\}}v^\beta(-\sqrt{-1}u)^{\int_\beta c_1(X)}\suml_{\substack{e_{0,1},\cdots,e_{0,L}\in\bZ_{\geqslant 0}\\e_{d,0}\in\bZ_{\geqslant0}}}Z'_{GW}(X';u|\prodl_{i=1}^L\tau_0(\cF T_i)^{e_{0,i}}\prodl_{d=0}^\infty\tau_d(\cF T_0)^{e_{d,0}})_{\cF\beta}\prodl_{i=1}^L\frac{t_{0,i}^{e_{0,i}}}{e_{0,i}!}\prodl_{d=0}^\infty\frac{((-\sqrt{-1}u)^{-1}t_{d,0})^{e_{d,0}}}{e_{d,0}!}}{1+\suml_{\beta\in Cen(f)\setminus\{0\}}v^{\beta}Z'_{GW}(X';u|)_{\cF\beta}}.
\nan
Here we have used the change of variables $t_{d,0}\mapsto(-\sqrt{-1}u)^{-1}t_{d,0}$. Similarly, note that the change of variables $v^\beta\mapsto v^\beta(-q)^{-\frac12\int_\beta c_1(X)}$ gives an isomorphism of the Novikov ring of $X$, and then \eqref{flop} gives
\ban
\frac{\suml_{\beta\in H_2(X,\bZ)}v^\beta(-q)^{-\frac12\int_\beta c_1(X)}\suml_{\substack{e_{0,1},\cdots,e_{0,L}\in\bZ_{\geqslant 0}\\e_{d,0}\in\bZ_{\geqslant 0}}}Z'_{DT}(X;q|\prodl_{i=1}^L\tilde\tau_0(T_i)^{e_{0,i}}\prodl_{d=0}^\infty\tilde\tau_d(T_0)^{e_{d,0}})_\beta\prodl_{i=1}^L\frac{t_{0,i}^{e_{0,i}}}{e_{0,i}!}\prodl_{d=0}^\infty\frac{t_{d,0}^{e_{d,0}}}{e_{d,0}!}}{\suml_{\beta\in Cen(f)}v^{\beta}Z'_{DT}(X;q|)_{\beta}}\\
=\frac{\suml_{\beta\in H_2(X,\bZ)}v^\beta(-q)^{-\frac12\int_\beta c_1(X)}\suml_{\substack{e_{0,1},\cdots,e_{0,L}\in\bZ_{\geqslant 0}\\e_{d,0}\in\bZ_{\geqslant 0}}}Z'_{DT}(X';q|\prodl_{i=1}^L\tilde\tau_0(\cF T_i)^{e_{0,i}}\prodl_{d=0}^\infty\tilde\tau_d(\cF T_0)^{e_{d,0}})_{\cF\beta}\prodl_{i=1}^L\frac{t_{0,i}^{e_{0,i}}}{e_{0,i}!}\prodl_{d=0}^\infty\frac{t_{d,0}^{e_{d,0}}}{e_{d,0}!}}{\suml_{\beta\in Cen(f)}v^{\beta}Z'_{DT}(X';q|)_{\cF\beta}}.
\nan
Now from \eqref{center}, \eqref{GWcenter} and the assumption in Corollary \ref{GWDT}, we obtain
\ban
Z'_{GW}(X';u|)_{\cF\beta}=Z'_{GW}(X;u|)_{-\beta}=Z'_{DT}(X;q|)_{-\beta}=Z'_{DT}(X';q|)_{\cF\beta},\quad\forall\beta\in Cen(f)\setminus\{0\},
\nan
and then the desired result follows from the above three long equalities.

{\bf Acknowledgements.} 

The author would like to thank Yongbin Ruan, Jianxun Hu, Wei-Ping Li and Zhenbo Qin for many constructive discussions, and Jian Zhou for warm encouragement. The author would also like to thank Xiaowen Hu for providing him the referernce \cite{Pi}.

\end{document}